\newcounter{alphthm}
\theoremstyle{plain}
\newtheorem{theorem}{Theorem}[section]
\newtheorem{lemma}[theorem]{Lemma}
\newtheorem{proposition}[theorem]{Proposition}
\newtheorem{cor}[theorem]{Corollary}
\theoremstyle{definition}
\newtheorem{definition}[theorem]{Definition}
\newcommand{\be}{\begin{equation}}
\newcommand{\ee}{\end{equation}}
\newcommand{\ben}{\begin{enumerate}}
\newcommand{\een}{\end{enumerate}}
\begin{document}
\title{On Soft Connectedness}
\author{E. Peyghan, B. Samadi, A. Tayebi}
\maketitle
\begin{abstract}
The soft topological spaces and some their related concepts have studied in \cite{SN}. In this paper, we introduce and study the notions of soft connected topological spaces after a review of preliminary definitions.  
\end{abstract}

\textbf{Keywords:} Soft connected, soft locally connected, soft open set, soft set, soft topological space.

\section{Introduction}
Some theories such as theory of vague sets, theory of rough sets and etc, can be considered as mathematical tools for dealing with uncertainties. But all of these theories have their own difficulties. Molodtsov \cite{M} introduced the concept of soft sets in order to solve complicated problems in some sciences such as, economics, engineering and etc. In fact the concept of soft sets is a new mathematical tool which is free from the difficulties mentioned above. 

In 2011, Shabir and Naz introduced and studied the concepts soft topological space and some related concepts such as soft interior, soft closed, soft subspace and soft separation axioms. 

This is a natural activity that a topologist wants to prove or disprove another important theorems of general topology of the soft set form. For instance in \cite{AA}, the authors introduced the soft product topology and defined the version of compactness in soft spaces named soft compactness. 

In this paper, we introduced some concepts such as soft connectedness, soft locally connectedness and we exhibit some results related to these concepts and soft product spaces.       
\section{Main results}
\begin{definition}
Let $(X, \tau, E)$ be a soft topological space over $X$. A soft
separation of $\widetilde{X}$ is a pair $(F, E)$, $(G, E)$ of
no-null soft open sets over $X$ such that
\[
\widetilde{X}=(F, E)\cup(G,
E),\ \ (F, E)\cap(G, E)=\Phi_E.
\]
\end{definition}
\begin{definition}
A soft topological space $(X, \tau, E)$ is said to be soft connected if
there does not exist a soft separation of $\widetilde{X}$.
\end{definition}
\begin{proposition}\label{1}
Let $(F, E)$ be a soft set in $SS(X)_E$. Then

(i)\ $(F, E)\cup (F, E)'=\widetilde{X}$,

(ii)\ $(F, E)\cap (F, E)'=\Phi_E$,

(iii)\ $(F, E)\cap \widetilde{X}=(F, E)$.
\end{proposition}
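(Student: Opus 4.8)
The plan is to reduce each of the three identities to a pointwise (parameter-by-parameter) computation in the power set $\mathcal{P}(X)$. Recall that a soft set $(F,E) \in SS(X)_E$ is completely determined by the assignment $e \mapsto F(e) \subseteq X$, that the soft operations are defined parameterwise by $(F,E)\cup(G,E)=(H,E)$ with $H(e)=F(e)\cup G(e)$ and $(F,E)\cap(G,E)=(H,E)$ with $H(e)=F(e)\cap G(e)$, and that the three distinguished objects are given parameterwise as well: $\widetilde{X}$ by $e \mapsto X$, the null soft set $\Phi_E$ by $e \mapsto \emptyset$, and the complement $(F,E)'$ by $e \mapsto X \setminus F(e)$. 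Since two soft sets over $E$ agree precisely when their values coincide for every $e \in E$, it suffices to verify each claimed equality one parameter at a time.

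For (i), fix $e \in E$. The $e$-th component of $(F,E)\cup(F,E)'$ is $F(e)\cup(X\setminus F(e))$, which equals $X$ by the classical complementation law in $\mathcal{P}(X)$; as this holds for every $e$, the union is the soft set $e \mapsto X$, namely $\widetilde{X}$. For (ii), the same fixed $e$ yields component $F(e)\cap(X\setminus F(e))=\emptyset$, so the intersection is $e \mapsto \emptyset$, which is $\Phi_E$. Both identities thus follow at once from the two halves of the Boolean complementation law, applied fiberwise over the parameter set.

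For (iii), fixing $e \in E$ once more, the $e$-th component of $(F,E)\cap\widetilde{X}$ is $F(e)\cap X = F(e)$, so the intersection returns $(F,E)$ itself. Each step is an immediate consequence of an elementary set identity, so no genuine computational difficulty arises. The only point requiring care is the convention for the soft complement: the argument above uses the relative complement $F'(e)=X\setminus F(e)$, and parts (i)--(ii) are precisely the assertions that this choice behaves as a Boolean complement with respect to $\widetilde{X}$ and $\Phi_E$. I would therefore state the definition of $(F,E)'$ explicitly at the outset so that the pointwise reduction is legitimate; once that convention is fixed, the three identities follow simultaneously from the laws $A\cup(X\setminus A)=X$, $A\cap(X\setminus A)=\emptyset$, and $A\cap X=A$ in $\mathcal{P}(X)$.
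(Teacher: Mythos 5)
Your proof is correct and takes essentially the same approach as the paper: both reduce the soft-set identities to parameterwise computations in $\mathcal{P}(X)$ and invoke the elementary Boolean laws $A\cup(X\setminus A)=X$, $A\cap(X\setminus A)=\emptyset$, and $A\cap X=A$. Your write-up is in fact more complete than the paper's, which proves only part (ii) and contains a typo there (writing $H(e)=F(e)\cup F'(e)$ where the intersection $F(e)\cap F'(e)$ is clearly intended, as the conclusion $=\emptyset$ shows), whereas you treat all three parts explicitly and sensibly fix the complement convention $F'(e)=X\setminus F(e)$ at the outset.
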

\begin{proof}
Let $(F, E)\cap (F, E)'=(H, E)$. Then $H(e)=F(e)\cup
F'(e)=F(e)\cup (X-F(e))=\emptyset$. Therefore $(H, E)=\Phi_E$.
\end{proof}
\begin{theorem}
A soft topological space $(X, \tau, E)$ is soft connected if and
only if the only soft sets in $SS(X)_E$ that are both soft open
and soft closed over $X$ are $\Phi_E$ and $\widetilde{X}$.
\end{theorem}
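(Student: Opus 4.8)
The plan is to prove both implications by contraposition, exploiting the symmetry between a soft separation and a nontrivial soft clopen set. The bridge between the two notions is the observation that, in a soft separation, each of the two pieces is the soft complement of the other; this is where Proposition~\ref{1} does the essential work.

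For the forward direction I would assume, contrapositively, that $\widetilde{X}$ admits a soft clopen set $(F,E)$ with $(F,E)\neq\Phi_E$ and $(F,E)\neq\widetilde{X}$, and manufacture a soft separation from it. Setting $(G,E)=(F,E)'$, the fact that $(F,E)$ is soft closed makes $(G,E)$ soft open, and Proposition~\ref{1}(i),(ii) gives $(F,E)\cup(G,E)=\widetilde{X}$ and $(F,E)\cap(G,E)=\Phi_E$. It then remains only to check that both pieces are non-null: $(F,E)\neq\Phi_E$ holds by hypothesis, and $(G,E)\neq\Phi_E$ because $(G,E)=\Phi_E$ would force $(F,E)=\widetilde{X}$. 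Thus $(F,E),(G,E)$ is a soft separation, contradicting soft connectedness.

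For the converse I would assume $\widetilde{X}$ is not soft connected, fix a soft separation $(F,E),(G,E)$, and extract a nontrivial soft clopen set. The decisive step is to argue that the separation conditions $(F,E)\cup(G,E)=\widetilde{X}$ and $(F,E)\cap(G,E)=\Phi_E$ force $(G,E)=(F,E)'$; working at the level of the member maps these yield $G(e)=X-F(e)=F'(e)$ for every $e\in E$. Granting this, $(F,E)=(G,E)'$ is soft closed because $(G,E)$ is soft open, while $(F,E)$ is soft open by the definition of a separation, so $(F,E)$ is soft clopen. Non-nullness of both pieces then gives $(F,E)\neq\Phi_E$ and $(F,E)\neq\widetilde{X}$, contradicting the assumption that the only soft clopen sets are $\Phi_E$ and $\widetilde{X}$.

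The routine parts are the open/closed bookkeeping and the elementary complement computations. The one step deserving care — the main obstacle — is justifying the complementarity $(G,E)=(F,E)'$ from the union and intersection conditions alone. This is the soft analogue of the statement that two disjoint sets whose union is the whole space are complements, and it must be verified pointwise through the member set maps $F,G\colon E\to 2^X$ using the definitions of soft union, soft intersection, and soft complement, rather than taken for granted. Fortunately Proposition~\ref{1} already exhibits exactly this style of pointwise reasoning, so the argument can be modeled on it.
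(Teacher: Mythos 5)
Your proposal is correct and follows essentially the same route as the paper's own proof: in the forward direction both pass from a nontrivial soft clopen $(F,E)$ to the separation $(F,E),(F,E)'$ via Proposition~\ref{1}, and in the converse both extract the complementarity $(G,E)=(F,E)'$ from the separation conditions by the pointwise computation $G(e)=X-F(e)=F'(e)$ for each $e\in E$. The only difference is cosmetic bookkeeping (you phrase both directions contrapositively and derive $(F,E)\neq\widetilde{X}$ from non-nullness of $(G,E)$, whereas the paper argues by contradiction and rules out $(F,E)=\widetilde{X}$ as a separate preliminary case), so there is nothing substantive to change.
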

\begin{proof}
Let $(X, \tau, E)$ be soft connected. Suppose to the contrary that
$(F, E)$ is both soft open and soft closed in $X$
different from $\Phi_E$ and $\widetilde{X}$.Clearly, $(F, E)'$
is a soft open set in $X$ different from $\Phi_E$ and
$\widetilde{X}$. Now by Proposition \ref{1} we have $(F,
E)$, $(F, E)'$ is a soft separation of $\widetilde{X}$. This is
a contradiction. Thus the only soft closed and open sets in
$X$ are $\Phi_E$ and $\widetilde{X}$. Conversely, let
$(F, E)$, $(G, E)$ be a soft separation of $\widetilde{X}$. Let
$(F, E)=\widetilde{X}$. Then Proposition \ref{1} implies that $(G,
E)=\Phi_E$. This is a contradiction. Hence, $(F,
E)\neq\widetilde{X}$. Since $F(e)\cap G(e)=\emptyset$ and
$F(e)\cup G(e)=X$, for each $e\in E$, we have $G'(e)=X-G(e)=F(e)$.
Therefore $(F, E)=(G, E)'$. This shows that $(F, E)$ is both soft
open and soft closed in $X$ different from $\Phi_E$ and $\widetilde{X}$. This is a contradiction. Therefore, $(X,
\tau, E)$ is soft connected.
\end{proof}
\begin{definition}
Let $SS(U)_A$ and $SS(V)_B$ be families of soft sets. Let
$u:U\longrightarrow V$ and $p:A\longrightarrow B$ be mappings.
Then a mapping $f_{pu}:SS(U)_A\longrightarrow SS(V)_B$ is defined
as:

(i)\ Let $(F, A)$ be a soft set in $SS(U)_A$. The image of $(F,
A)$ under $f_{pu}$, written as $f_{pu}(F, A)=(f_{pu}(F), B)$ is a
soft set in $SS(V)_B$ such that,
\[
f_{pu}(F)(y)=\left\{
\begin{array}{ccc}
\bigcup_{x\in p^{-1}(y)\cap A} u(F(x))&p^{-1}(y)\cap
A\neq\emptyset\\
\emptyset &p^{-1}(y)\cap A=\emptyset
\end{array}
\right.
\]
for each $y\in B$.

(ii)\ Let $(G, B)$ be a soft set in $SS(V)_B$. Then the inverse image of $(G, B)$ under $f_{pu}$, written as $f^{-1}_{pu}(G, B)=(f^{-1}_{pu}(G), A)$, is a soft set in
$SS(U)_A$ such that, $f^{-1}_{pu}(G)(x)=u^{-1}(G(p(x)))$, for each $x\in A$.
\end{definition}
\begin{proposition}\label{babak}
Let $SS(U)_A$ and $SS(V)_B$ be families of soft sets. For a function $f_{pu}:SS(U)_A\longrightarrow SS(V)_B$ we have

(i)\ $f^{-1}_{pu}((F, B)\cup(G, B))=f^{-1}_{pu}(F, B)\cup f^{-1}_{pu}(G, B)$,

(ii)\ $f^{-1}_{pu}(\widetilde{V})=\widetilde{U}$,

(iii)\ $f_{pu}((F, A)\cap(G, A))\widetilde{\subseteq} f_{pu}(F, A)\cap f_{pu}(G, A)$,

(iv)\ $f^{-1}_{pu}((F, B)\cap(G, B))=f^{-1}_{pu}(F, B)\cap f^{-1}_{pu}(G, B)$,

(v)\ $f^{-1}_{pu}(\Phi_B)=\Phi_A$.
\end{proposition}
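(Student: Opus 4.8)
The plan is to reduce every assertion to a parameter-wise identity governed by the ordinary set-theoretic behaviour of the maps $u$ and $u^{-1}$. Recall that the union and intersection of two soft sets are formed parameter by parameter, that $\widetilde{V}$ assigns $V$ to each $y\in B$ and $\widetilde{U}$ assigns $U$ to each $x\in A$, and that $\Phi_B$, $\Phi_A$ assign the empty set everywhere. Once the definitions of $f^{-1}_{pu}$ and $f_{pu}$ are unfolded, parts (i), (ii), (iv) and (v) will follow from the well-known facts that preimage under an ordinary function preserves unions and intersections and satisfies $u^{-1}(V)=U$ and $u^{-1}(\emptyset)=\emptyset$, while part (iii) will rest on the weaker statement $u(S\cap T)\subseteq u(S)\cap u(T)$, which is the sole source of a genuine inclusion rather than an equality.

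For the four inverse-image statements I would argue at a fixed parameter $x\in A$. Using $f^{-1}_{pu}(G)(x)=u^{-1}(G(p(x)))$ and the parameter-wise description of the soft operations, for (i) I compute
\[
f^{-1}_{pu}((F,B)\cup(G,B))(x)=u^{-1}(F(p(x))\cup G(p(x)))=u^{-1}(F(p(x)))\cup u^{-1}(G(p(x))),
\]
which is exactly the value of $f^{-1}_{pu}(F,B)\cup f^{-1}_{pu}(G,B)$ at $x$; replacing $\cup$ by $\cap$ gives (iv) verbatim. For (ii), at each $x$ I get $u^{-1}(\widetilde{V}(p(x)))=u^{-1}(V)=U$, so $f^{-1}_{pu}(\widetilde{V})=\widetilde{U}$; for (v), $u^{-1}(\Phi_B(p(x)))=u^{-1}(\emptyset)=\emptyset$, so $f^{-1}_{pu}(\Phi_B)=\Phi_A$. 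Each of these is a routine unfolding once the commutation laws for $u^{-1}$ are invoked.

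The one statement deserving attention is the image inclusion (iii), where equality genuinely can fail. Fixing $y\in B$, if $p^{-1}(y)\cap A=\emptyset$ both sides are empty and the inclusion is trivial, so assume the fibre is nonempty. For each $x$ in the fibre, monotonicity of $u$ applied to $F(x)\cap G(x)\subseteq F(x)$ and to $F(x)\cap G(x)\subseteq G(x)$ yields $u(F(x)\cap G(x))\subseteq u(F(x))\cap u(G(x))$, and each factor embeds into the corresponding union over the whole fibre, so
\[
u(F(x)\cap G(x))\subseteq\Big(\bigcup_{x'\in p^{-1}(y)\cap A}u(F(x'))\Big)\cap\Big(\bigcup_{x'\in p^{-1}(y)\cap A}u(G(x'))\Big).
\]
Taking the union over all $x$ in the fibre on the left gives precisely $f_{pu}((F,A)\cap(G,A))(y)\widetilde{\subseteq}(f_{pu}(F,A)\cap f_{pu}(G,A))(y)$.

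The main obstacle is conceptual rather than computational: I expect (iii) to be the only delicate point, because here two independent obstructions to equality combine. The image map $u$ need not distribute over intersections, and, even when it does, the union over the fibre $p^{-1}(y)\cap A$ couples the two factors, so that a common value on the right may arise from different parameters $x\neq x'$ with no single $x$ realizing it on the left. I would therefore state (iii) only as an inclusion and not attempt to force equality, which would require additional injectivity hypotheses on $u$ and $p$ lying outside the present statement.
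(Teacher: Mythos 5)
Your proposal is correct and follows essentially the same route as the paper: a parameter-wise unfolding of $f^{-1}_{pu}$ and $f_{pu}$, reducing (i), (ii), (iv), (v) to the standard commutation laws for $u^{-1}$ and reducing (iii) to $u(S\cap T)\subseteq u(S)\cap u(T)$ together with embedding into the fibre-wise unions. If anything, yours is more complete than the paper's: you spell out the step the paper dismisses as ``easy to see'' in (iii), handle the empty-fibre case explicitly, and supply the argument for (v), which the paper's proof omits entirely.
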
\label{2}
\begin{proof}
(i)\ Let $(F, B)\cup (G, B)=(H, B)$. Then $f^{-1}_{pu}(H, B)=(f^{-1}_{pu}(H), A)$, where $f^{-1}_{pu}(H)(x)=u^{-1}(H(p(x)))$, for each $x\in A$. On the other hand, let
$f^{-1}_{pu}(F, B)\cup f^{-1}_{pu}(G, B)=(O, A)$, where
\[
O(x)=f^{-1}_{pu}(F)(x)\cup f^{-1}_{pu}(G)(x)=u^{-1}\Big(F(p(x))\cup G(p(x))\Big)=u^{-1}(H(p(x))),
\]
for each $x\in A$. Therefore $f^{-1}_{pu}(H, B)=(O, A)$.

(ii)\ $f^{-1}_{pu}(\widetilde{V})=f^{-1}_{pu}(V, B)=(f^{-1}_{pu}(V), A)$, where $f^{-1}_{pu}(V)(x)=u^{-1}(V(p(x)))=u^{-1}(V)=U=U(x)$.

(iii)\ Let $(F, A)\cap (G, A)=(H, A)$. Then $f_{pu}(H, A)=(f_{pu}(H), B)$, where
\[
f_{pu}(H)(y)=\left\{
\begin{array}{ccc}
\bigcup_{x\in p^{-1}(y)\cap A} u(H(x))&p^{-1}(y)\cap
A\neq\emptyset\\
\emptyset &p^{-1}(y)\cap A=\emptyset
\end{array}
\right.
\]
for each $y\in B$. On the other hand, let $f_{pu}(F, A)\cap f_{pu}(G, A)=(O, B)$, where $O(y)=f_{pu}(F)(y)\cap f_{pu}(G)(y)$, for each $y\in B$. We have
\[
O(y)=\left\{x\in A
\begin{array}{ccc}
(\bigcup_{x\in p^{-1}(y)\cap A} u(F(x)))\cap (\bigcup_{x\in p^{-1}(y)\cap A} u(G(x)))&p^{-1}(y)\cap
A\neq\emptyset\\
\emptyset &p^{-1}(y)\cap A=\emptyset
\end{array}
\right.
\]
for each $y\in B$. Since $H(x)=F(x)\cap G(x)$, for each $x\in A$, then it is easy to see that $f_{pu}(H)(y)\subseteq O(y)$ for each $y\in p^{-1}(y)\cap A$. This implies that $f_{pu}(H, A)\subseteq (O, B)$.

(iv)\ Let $(F, B)\cap (G, B)=(H, B)$. Then $f^{-1}_{pu}(H, B)=(f^{-1}_{pu}(H), A)$, where $f^{-1}_{pu}(H)(x)=u^{-1}(H(p(x)))$, for each $x\in A$. On the other hand, let $f^{-1}_{pu}(F, B)\cap f^{-1}_{pu}(G, B)=(O, A)$, where
\begin{eqnarray*}
O(x)\!\!\!\!&=&\!\!\!\!f^{-1}_{pu}(F)(x)\cap f^{-1}_{pu}(G)(x)=u^{-1}(F(p(x)))\cap u^{-1}(G(p(x)))\\
\!\!\!\!&=&\!\!\!\!u^{-1}(H(p(x))),
\end{eqnarray*}
for each $x\in A$.
Therefore, $f^{-1}_{pu}(H, B)=(O, A)$.
\end{proof}
\begin{definition}
Let $(U, \tau, A)$ and $(V, \tau', B)$ be soft topological spaces. Let $f_{pu}:SS(U)_A\longrightarrow SS(V)_B$ be a function. Then $f_{pu}$ is said to be soft $pu$- continuous if for each $(F, B)\in\tau'$ we have $f^{-1}_{pu}(F, B)\in\tau$.
\end{definition}
\begin{theorem}
Let $f_{pu}$ be a soft $pu$- continuous function carrying the soft connected space $(U, \tau, A)$ onto the soft space $(V, \tau', B)$. Then $(V, \tau', B)$ is soft connected.
\end{theorem}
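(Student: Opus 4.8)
The plan is to argue by contradiction, adapting the classical fact that the continuous image of a connected space is connected. So I would suppose that $(V, \tau', B)$ is \emph{not} soft connected, which by definition furnishes a soft separation $(F, B)$, $(G, B)$ of $\widetilde{V}$: two no-null soft open sets with $\widetilde{V} = (F, B) \cup (G, B)$ and $(F, B) \cap (G, B) = \Phi_B$. The goal is then to transport this separation back to $U$ and contradict the soft connectedness of $(U, \tau, A)$.

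First I would pull the two pieces back along $f_{pu}$. By soft $pu$-continuity, $f^{-1}_{pu}(F, B)$ and $f^{-1}_{pu}(G, B)$ both lie in $\tau$, i.e. they are soft open over $U$. Next I would check that these preimages cover $\widetilde{U}$ and are disjoint. Applying Proposition~\ref{babak}(i) to the union together with Proposition~\ref{babak}(ii) gives
\[
f^{-1}_{pu}(F, B) \cup f^{-1}_{pu}(G, B) = f^{-1}_{pu}\big((F, B)\cup(G, B)\big) = f^{-1}_{pu}(\widetilde{V}) = \widetilde{U},
\]
while Proposition~\ref{babak}(iv) together with Proposition~\ref{babak}(v) gives
\[
f^{-1}_{pu}(F, B) \cap f^{-1}_{pu}(G, B) = f^{-1}_{pu}\big((F, B)\cap(G, B)\big) = f^{-1}_{pu}(\Phi_B) = \Phi_A.
\]
Thus the pair $f^{-1}_{pu}(F, B)$, $f^{-1}_{pu}(G, B)$ satisfies every requirement of a soft separation of $\widetilde{U}$ except, possibly, the no-null condition.

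The main obstacle --- and the only place where the hypothesis that $f_{pu}$ is \emph{onto} enters --- is verifying that neither preimage is null. Here I would unwind the definition of the inverse image: $f^{-1}_{pu}(F)(x) = u^{-1}(F(p(x)))$ for each $x \in A$. Since $(F, B)$ is no-null, there is some $e_0 \in B$ with $F(e_0) \neq \emptyset$; surjectivity of $p$ produces an $x_0 \in A$ with $p(x_0) = e_0$, and surjectivity of $u$ then forces $u^{-1}(F(e_0)) \neq \emptyset$, so that $f^{-1}_{pu}(F)(x_0) \neq \emptyset$ and hence $f^{-1}_{pu}(F, B) \neq \Phi_A$; the same argument applies to $(G, B)$. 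I expect the delicate point to be pinning down exactly what ``carrying $\dots$ onto'' means at the level of the underlying maps $u$ and $p$, since the no-null conclusion genuinely requires the surjectivity of both. With both preimages shown to be no-null, the pair $f^{-1}_{pu}(F, B)$, $f^{-1}_{pu}(G, B)$ is a soft separation of $\widetilde{U}$, contradicting the assumed soft connectedness of $(U, \tau, A)$. Therefore no soft separation of $\widetilde{V}$ can exist, and $(V, \tau', B)$ is soft connected.
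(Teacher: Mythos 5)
Your proposal is correct and follows essentially the same route as the paper's proof: pull the separation back along $f_{pu}$, use Proposition~\ref{babak} to get that the preimages cover $\widetilde{U}$ and intersect in $\Phi_A$, use soft $pu$-continuity for openness, and invoke surjectivity to rule out null preimages, contradicting the soft connectedness of $(U,\tau,A)$. If anything, your write-up is more complete at the one delicate point: where the paper disposes of the no-null condition by appeal to an unresolved reference (``by Theorem \dots\ and Proposition~\ref{babak}''), you verify it directly from the formula $f^{-1}_{pu}(F)(x)=u^{-1}(F(p(x)))$, correctly observing that surjectivity of \emph{both} $p$ and $u$ is what makes the argument go through.
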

\begin{proof}
Suppose to the contrary there exists a soft separation $(F, B)$, $(G, B)$ of $\widetilde{V}$. Then Proposition \ref{babak} implies that $\widetilde{U}=f^{-1}_{pu}((F, B)\cup(G, B))=f^{-1}_{pu}(F, B)\cup f^{-1}_{pu}(G, B)$ and $f^{-1}_{pu}(F, B)\cap f^{-1}_{pu}(G, B)=f^{-1}_{pu}(\Phi_B)=\Phi_A$. If $f^{-1}_{pu}(F, B)=\Phi_A$, since $f_{pu}$ is surjective, by Theorem ... and Proposition \ref{babak}, we have $
(F, B)=\Phi_B$. This is a contradiction. Therefore $f^{-1}_{pu}(F, B)$ and by a similar reason $f^{-1}_{pu}(G, B)$ are different from $\Phi_A$. This shows that $f^{-1}_{pu}(F, B)$, $f^{-1}_{pu}(G, B)$ is a soft separation of $\widetilde{U}$. This is a contradiction, and this completes the proof.
\end{proof}
\begin{definition}
Let $(F, E)$ be a soft set over $X$ and $Y$ be a nonempty subset of $X$. Then the sub soft set of $(F, E)$ over $Y$ denoted by $({}^YF, E)$ is defined as follows
\[
{}^YF(e)=Y\cap F(e),
\]
for each $e\in E$. In other word $({}^YF, E)=\widetilde{Y}\cap (F, E)$.
\end{definition}
\begin{definition}
Let $(X, \tau, E)$ be a soft topological space over $X$ and $Y$ be a nonempty subset of $X$. Then
\[
\tau_Y=\{({}^YF, E)|(F, E)\in\tau\},
\]
is said to be the soft relative topology on $Y$ and $(Y, \tau_Y, E)$ is called a soft subspace of $(X, \tau, E)$.
\end{definition}
\begin{proposition}\label{6}
Let $(F, E)$, $(G, E)$ and $(H, E)$ be soft sets in $SS(X)_E$. Then,

(i)\ $(F, E)\cap ((G, E)\cup(H, E))=((F, E)\cap(G, E))\cup((F, E)\cap(H, E))$,

(ii)\ $(F, E)\widetilde{\subseteq} (G, E)$ if and only if $(F, E)\cap (G, E)=(F, E)$.
\end{proposition}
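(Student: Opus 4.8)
The plan is to push both identities down to the parameter level, where they become the familiar set-theoretic facts about subsets of $X$. Recall that the soft operations are all defined coordinatewise: if $(F,E)\cup(G,E)=(K,E)$ then $K(e)=F(e)\cup G(e)$ for each $e\in E$, if $(F,E)\cap(G,E)=(K,E)$ then $K(e)=F(e)\cap G(e)$, and $(F,E)\widetilde{\subseteq}(G,E)$ means precisely that $F(e)\subseteq G(e)$ for every $e\in E$. So in each case it suffices to verify the corresponding statement for the ordinary sets $F(e),G(e),H(e)\subseteq X$, one parameter $e$ at a time.

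For part (i), following the bookkeeping used in Proposition \ref{babak}, I would first name $(G,E)\cup(H,E)=(K,E)$, so that $K(e)=G(e)\cup H(e)$, and then compute the left-hand side as a soft set $(L,E)=(F,E)\cap(K,E)$ with $L(e)=F(e)\cap\bigl(G(e)\cup H(e)\bigr)$ for each $e$. The classical distributive law for subsets of $X$ gives $L(e)=\bigl(F(e)\cap G(e)\bigr)\cup\bigl(F(e)\cap H(e)\bigr)$, and this last expression is by definition the value at $e$ of the right-hand soft set $((F,E)\cap(G,E))\cup((F,E)\cap(H,E))$. Since the two soft sets agree parameter by parameter, they are equal.

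For part (ii), I would prove the two implications separately, again reducing to the coordinatewise description. Assuming $(F,E)\widetilde{\subseteq}(G,E)$, i.e. $F(e)\subseteq G(e)$ for all $e$, the elementary fact that $A\cap B=A$ whenever $A\subseteq B$ yields $F(e)\cap G(e)=F(e)$ for each $e$, which is exactly $(F,E)\cap(G,E)=(F,E)$. Conversely, if $(F,E)\cap(G,E)=(F,E)$ then $F(e)\cap G(e)=F(e)$ for each $e$, and since $F(e)\cap G(e)\subseteq G(e)$ this forces $F(e)\subseteq G(e)$, i.e. $(F,E)\widetilde{\subseteq}(G,E)$.

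I do not expect any genuine obstacle here: both statements are the soft-set shadows of the distributive law and of the characterization of inclusion via intersection, and the entire content of the proof is the (routine) observation that soft union, intersection and inclusion are all evaluated coordinatewise. The only point requiring mild care is to keep the naming of the auxiliary soft sets consistent, so that the left- and right-hand sides are genuinely compared at the same parameter $e$, exactly as in the proofs of Propositions \ref{1} and \ref{babak}.
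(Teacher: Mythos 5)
Your proof is correct and follows essentially the same route as the paper: for (i) you name the intermediate soft sets and reduce the identity to the classical distributive law $F(e)\cap(G(e)\cup H(e))=(F(e)\cap G(e))\cup(F(e)\cap H(e))$ applied parameter by parameter, which is exactly the paper's computation. The paper in fact omits any proof of part (ii), so your coordinatewise argument there (using $A\subseteq B \Leftrightarrow A\cap B=A$ for each $e\in E$) fills that gap correctly in the same spirit.
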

\begin{proof}
(i)\ Let $(G, E)\cup (H, E)=(A, E)$ and $(F, E)\cap(A, E)=(B, E)$. Then, $B(e)=F(e)\cap A(e)=F(e)\cap (G(e)\cup H(e))=(F(e)\cap G(e))\cup(F(e)\cap H(e))$, for each $e\in E$. On the other hand, if $(F, E)\cap (G, E)=(C, E)$, $(F, E)\cap (H, E)=(D, E)$ and $(C, E)\cup (D, E)=(I, E)$. we have, $I(e)=C(e)\cup D(e)=(F(e)\cap
G(e))\cup (F(e)\cap H(e))$ for each $e\in E$. Therefore, $(B,E)=(I,E)$.
\end{proof}
\begin{proposition}\label{4}
If the soft sets $(F, E)$ and $(G, E)$ form a soft separation of $\widetilde{X}$, and $(Y, \tau_Y, E)$ is a soft connected subspace of $(X, \tau, E)$, then $\widetilde{Y}$ lies entirely within either $(F, E)$ or $(G, E)$.
\end{proposition}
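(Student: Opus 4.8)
The plan is to restrict the given soft separation to the subspace $Y$ and then let soft connectedness do the work. First I would form the two sub soft sets $({}^YF, E) = \widetilde{Y}\cap(F, E)$ and $({}^YG, E) = \widetilde{Y}\cap(G, E)$. Since $(F, E)$ and $(G, E)$ are soft open over $X$ (being the two halves of a soft separation), the very definition of the soft relative topology $\tau_Y$ guarantees that both $({}^YF, E)$ and $({}^YG, E)$ are soft open sets in the subspace $(Y, \tau_Y, E)$.

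Next I would check that these two restricted pieces satisfy the two defining equations of a soft separation of $\widetilde{Y}$. Using the distributive law of Proposition \ref{6}(i) together with $(F, E)\cup(G, E) = \widetilde{X}$ and Proposition \ref{1}(iii), I would compute
\[
({}^YF, E)\cup({}^YG, E) = \widetilde{Y}\cap\big((F, E)\cup(G, E)\big) = \widetilde{Y}\cap\widetilde{X} = \widetilde{Y}.
\]
In the same way, pulling $\widetilde{Y}$ out of the intersection and invoking $(F, E)\cap(G, E) = \Phi_E$ yields
\[
({}^YF, E)\cap({}^YG, E) = \widetilde{Y}\cap\big((F, E)\cap(G, E)\big) = \widetilde{Y}\cap\Phi_E = \Phi_E.
\]
Hence, provided neither piece is null, the pair $({}^YF, E)$, $({}^YG, E)$ is a genuine soft separation of $\widetilde{Y}$.

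Now the connectedness hypothesis enters: since $(Y, \tau_Y, E)$ admits no soft separation, at least one of $({}^YF, E)$, $({}^YG, E)$ must equal $\Phi_E$. Assuming $({}^YF, E) = \Phi_E$, the covering identity displayed above collapses to $\widetilde{Y} = ({}^YG, E) = \widetilde{Y}\cap(G, E)$, and Proposition \ref{6}(ii) then converts this equality into the inclusion $\widetilde{Y}\,\widetilde{\subseteq}\,(G, E)$; the symmetric case gives $\widetilde{Y}\,\widetilde{\subseteq}\,(F, E)$, which is exactly the desired conclusion. The argument is mostly routine bookkeeping, and the one step deserving care is this last one: I must resist concluding merely \emph{disjointness} of $\widetilde{Y}$ from one half, and instead use the covering identity to upgrade the vanishing of one restricted piece into an honest \textbf{inclusion} of $\widetilde{Y}$ in the other via Proposition \ref{6}(ii).
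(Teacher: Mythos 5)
Your proof is correct and takes essentially the same route as the paper's: both restrict the separation to the subspace via $\widetilde{Y}=(\widetilde{Y}\cap(F,E))\cup(\widetilde{Y}\cap(G,E))$, note that the two restricted pieces are soft open in $(Y,\tau_Y,E)$ and softly disjoint, and then invoke soft connectedness of $Y$ to rule out both pieces being non-null. The only difference is presentational — the paper runs the argument by contradiction (assuming $\widetilde{Y}$ lies in neither half and exhibiting a soft separation), whereas you argue directly that one restricted piece must be $\Phi_E$ and then convert the covering identity into the inclusion via Proposition \ref{6}(ii).
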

\begin{proof}
Since $\widetilde{Y}\widetilde{\subseteq} (F,E)\cup (G,E)$ by Proposition \ref{6} we have, $\widetilde{Y}=(\widetilde{Y}\cap(F,E))\cup (\widetilde{Y}\cap(G,E))$ that  $\widetilde{Y}\cap (F,E)$ and $\widetilde{Y}\cap (G,E)$ are soft open sets over $Y$. Suppose to the contrary $\widetilde{Y}$ does not lie entirely within either $(F,E) $ or $(G,E)$. By the hypothesis, Proposition 3.4 of \cite{ZAMA} and Proposition \ref{6}, $(\widetilde{Y}\cap (F,E)$ and $(\widetilde{Y}\cap (G,E)$ are different from $\widetilde{Y}$ and $\Phi_Y$. But $Y(e)\cap F(e)\cap G(e)=\emptyset$, for each $e\in E$. Therefore, $(\widetilde{Y}\cap (F, E))\cap (\widetilde{Y}\cap(G, E))=\Phi_Y$. Since $(\widetilde{Y}\cap(F, E))$ and $(\widetilde{Y}\cap(G, E))$ are soft open sets over $\widetilde{Y}$, then we have a soft separation of $\widetilde{Y}$. This is a contradiction. This completes the proof.
\end{proof}
\begin{proposition}\label{7}
Let $(F, E)$, $(G, E)$ and $(H, E)$ be soft sets in $SS(X)_E$. Then

(i)\ $(F, E)\cap ((G, E)\cap (H, E))=((F, E)\cap(G, E))\cap(H, E)$,

(ii)\ $(F, E)\cup ((G, E)\cup (H, E))=((F, E)\cup(G, E))\cup(H, E)$.
\end{proposition}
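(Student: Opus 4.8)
The plan is to reduce each identity to the corresponding associativity law for ordinary set operations, applied separately at every parameter $e\in E$, exactly in the style of the proofs of Proposition \ref{1} and Proposition \ref{6}. Since soft intersection and soft union are both defined parameterwise, two soft sets over $X$ coincide precisely when their defining functions agree on each $e\in E$; hence it suffices to check equality of the two sides fiberwise, and no topological input is needed.

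For (i), first I would name the intermediate soft sets: set $(G,E)\cap(H,E)=(A,E)$ and $(F,E)\cap(A,E)=(B,E)$, so that by the pointwise definition of soft intersection $B(e)=F(e)\cap\big(G(e)\cap H(e)\big)$ for each $e\in E$. On the other hand, writing $(F,E)\cap(G,E)=(C,E)$ and $(C,E)\cap(H,E)=(I,E)$, the same definition gives $I(e)=\big(F(e)\cap G(e)\big)\cap H(e)$ for each $e\in E$. Now invoking the associativity of ordinary set intersection, namely $F(e)\cap(G(e)\cap H(e))=(F(e)\cap G(e))\cap H(e)$, I obtain $B(e)=I(e)$ for every $e\in E$, whence $(B,E)=(I,E)$, which is the claimed identity.

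For (ii), I would proceed identically with $\cup$ in place of $\cap$: introduce $(G,E)\cup(H,E)=(A,E)$, $(F,E)\cup(A,E)=(B,E)$, $(F,E)\cup(G,E)=(C,E)$ and $(C,E)\cup(H,E)=(I,E)$, compute $B(e)=F(e)\cup\big(G(e)\cup H(e)\big)$ and $I(e)=\big(F(e)\cup G(e)\big)\cup H(e)$, and conclude $B(e)=I(e)$ from the associativity of ordinary union. I do not anticipate any genuine obstacle here: the entire content is the fiberwise reduction, and the only thing to watch is the clean bookkeeping of the auxiliary soft sets so that each displayed equality is unambiguous.
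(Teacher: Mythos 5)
Your proof is correct: the paper in fact states Proposition \ref{7} with no proof at all, and your parameterwise reduction to the associativity of ordinary intersection and union is precisely the routine argument implicit there, written in the same fiberwise style as the paper's proofs of Propositions \ref{1} and \ref{6}. There is no gap; the bookkeeping with the auxiliary soft sets $(A,E)$, $(B,E)$, $(C,E)$, $(I,E)$ is exactly what a written-out proof would require.
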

\begin{proposition}\label{3}
Let $\{(F_\alpha, E)\}_{\alpha\in J}$ be a family of soft sets in $SS(X)_E$, then

(i)\ $(F, E)\cap(\cup_{\alpha\in J}(F_\alpha, E))=\cup_{\alpha\in J}((F, E)\cap(F_\alpha, E))$,

(ii)\ If $(F, E)=(G, E)\cup(H, E)$, then $(G, E), (H, E)\widetilde{\subset}(F, E)$.
\end{proposition}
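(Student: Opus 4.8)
Both statements reduce to the corresponding facts for ordinary subsets of $X$ applied parameterwise, since every soft-set operation in $SS(X)_E$ is defined separately for each $e\in E$. The plan is therefore to fix an arbitrary $e\in E$, translate each soft identity into an identity of ordinary sets, and invoke the classical set-theoretic laws; there is no genuinely hard step, only bookkeeping.

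For part (i), I would name the two sides: put $(F, E)\cap\big(\cup_{\alpha\in J}(F_\alpha, E)\big)=(L, E)$ and $\cup_{\alpha\in J}\big((F, E)\cap(F_\alpha, E)\big)=(R, E)$. Evaluating the arbitrary union and the intersection parameterwise gives $L(e)=F(e)\cap\big(\cup_{\alpha\in J}F_\alpha(e)\big)$ and $R(e)=\cup_{\alpha\in J}\big(F(e)\cap F_\alpha(e)\big)$ for each $e\in E$. The ordinary distributive law for intersection over an arbitrary union then yields $L(e)=R(e)$ for every $e$, whence $(L, E)=(R, E)$. This is just the infinitary analogue of Proposition \ref{6}(i), so nothing new is needed beyond allowing the index set $J$ to be arbitrary.

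For part (ii), from $(F, E)=(G, E)\cup(H, E)$ I would read off $F(e)=G(e)\cup H(e)$ for each $e\in E$, which immediately forces $G(e)\subseteq F(e)$ and $H(e)\subseteq F(e)$. By the definition of soft inclusion this means $(G, E)\widetilde{\subseteq}(F, E)$ and $(H, E)\widetilde{\subseteq}(F, E)$. Equivalently, one can verify $(G, E)\cap(F, E)=(G, E)$ parameterwise and then invoke Proposition \ref{6}(ii) to conclude the inclusion.

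The only place calling for a little care is part (i): one must keep the arbitrary union $\cup_{\alpha\in J}$ and the binary intersection with $(F, E)$ clearly separated before applying distributivity, but once the two sides are written out parameterwise the equality is immediate. Accordingly I expect no real obstacle in either part, the entire content being a transfer of elementary set identities through the parameter set $E$.
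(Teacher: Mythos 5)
Your proof is correct, and in fact the paper states Proposition \ref{3} with no proof at all; your parameterwise verification is precisely the routine argument the paper itself employs for the analogous binary identities (compare the proof of Proposition \ref{6}), so you have simply supplied the omitted details in the paper's own style. One small point worth noting: the statement writes $(G,E),(H,E)\widetilde{\subset}(F,E)$, which must be read as soft inclusion $\widetilde{\subseteq}$ rather than strict inclusion (strictness fails whenever, say, $(H,E)=(F,E)$), and your argument establishes exactly this correct reading.
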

\begin{lemma}\label{5}
Let $(Y, \tau', E)$ and $(Z, \tau'', E)$ be soft subspaces of $(X, \tau, E)$ and $(Y, E)\widetilde{\subseteq} (Z, E)$. Then $(Y, \tau, E)$ is a soft subspace of $(Z, \tau'', E)$.
\end{lemma}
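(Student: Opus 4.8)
The plan is to show that the soft relative topology which $Y$ inherits as a subspace of $(Z,\tau'',E)$ coincides with $\tau'=\tau_Y$, the soft relative topology it inherits directly from $(X,\tau,E)$; this is precisely the assertion that $(Y,\tau',E)$ is a soft subspace of $(Z,\tau'',E)$. Writing $\tau''=\tau_Z=\{\widetilde{Z}\cap(F,E)\mid(F,E)\in\tau\}$, I would denote by $(\tau_Z)_Y=\{\widetilde{Y}\cap(G,E)\mid(G,E)\in\tau_Z\}$ the soft relative topology induced on $Y$ from $Z$, so that the whole task reduces to the set identity $(\tau_Z)_Y=\tau_Y$.

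First I would take a generic soft open set of $(\tau_Z)_Y$. By definition it has the form $\widetilde{Y}\cap(G,E)$ with $(G,E)=\widetilde{Z}\cap(F,E)$ for some $(F,E)\in\tau$. Substituting gives $\widetilde{Y}\cap(\widetilde{Z}\cap(F,E))$, and by the associativity of soft intersection (Proposition \ref{7}(i)) this equals $(\widetilde{Y}\cap\widetilde{Z})\cap(F,E)$.

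The key step is then to evaluate $\widetilde{Y}\cap\widetilde{Z}$ at each parameter $e\in E$: since $\widetilde{Y}(e)=Y$, $\widetilde{Z}(e)=Z$, and $Y\subseteq Z$, we obtain $(\widetilde{Y}\cap\widetilde{Z})(e)=Y\cap Z=Y=\widetilde{Y}(e)$, so that $\widetilde{Y}\cap\widetilde{Z}=\widetilde{Y}$. Hence the generic element becomes $\widetilde{Y}\cap(F,E)=({}^YF,E)$, which lies in $\tau_Y$ by definition; this yields $(\tau_Z)_Y\,\widetilde{\subseteq}\,\tau_Y$. For the reverse inclusion I would simply reverse the computation: any $({}^YF,E)=\widetilde{Y}\cap(F,E)\in\tau_Y$ can be rewritten as $(\widetilde{Y}\cap\widetilde{Z})\cap(F,E)=\widetilde{Y}\cap(\widetilde{Z}\cap(F,E))$, exhibiting it as $\widetilde{Y}$ intersected with the element $\widetilde{Z}\cap(F,E)\in\tau_Z$, and therefore as a member of $(\tau_Z)_Y$.

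I do not expect a genuine obstacle here, as the argument is the soft analogue of the transitivity of subspace topologies. The only points requiring care are bookkeeping ones: invoking associativity of intersection from Proposition \ref{7}(i) rather than taking it for granted, and using the hypothesis $(Y,E)\,\widetilde{\subseteq}\,(Z,E)$ precisely in the reduction $\widetilde{Y}\cap\widetilde{Z}=\widetilde{Y}$, which is exactly where the containment $Y\subseteq Z$ is consumed.
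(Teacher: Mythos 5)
Your proof is correct and takes essentially the same approach as the paper: both reduce the claim to the identity $\widetilde{Y}\cap(\widetilde{Z}\cap(F,E))=(\widetilde{Y}\cap\widetilde{Z})\cap(F,E)=\widetilde{Y}\cap(F,E)$, using associativity of soft intersection (Proposition~\ref{7}) together with $\widetilde{Y}\cap\widetilde{Z}=\widetilde{Y}$, and then run the rewriting in both directions. The only cosmetic difference is that you verify $\widetilde{Y}\cap\widetilde{Z}=\widetilde{Y}$ by evaluating at each parameter $e\in E$, whereas the paper obtains it from Proposition~\ref{6}(ii).
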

\begin{proof}
By Proposition \ref{6}, we have $\widetilde{Y}=\widetilde{Y}\cap\widetilde{Z}$, moreover each soft open set of $(Y, \tau', E)$ is of the form $\widetilde{Y}\cap (F, E)$ where $(F, E)$ is a soft open set of $(X, \tau, E)$. Therefore, by Proposition \ref{7}, we have $\widetilde{Y}\cap(F, E)=(\widetilde{Y}\cap \widetilde{Z})\cap(F, E)=\widetilde{Y}\cap(\widetilde{Z}\cap(F, E))$. Conversely, it is clear that each soft open set in $Y$ as a soft subspace of $(Z, \tau'', E)$ is of the form $Y\cap(\widetilde{Z}\cap(F, E))=\widetilde{Y}\cap(F, E)$. This completes the proof.
\end{proof}
\begin{theorem}\label{babak3}
The union of a collection of soft connected subspace of $(X, \tau, E)$ that have non-null intersection is soft connected.
\end{theorem}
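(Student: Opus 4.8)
The plan is to argue by contradiction, adapting the classical fact that a union of connected subspaces with a common point is connected. Denote the given soft connected subspaces by $(Y_\alpha,\tau_{Y_\alpha},E)$ for $\alpha\in J$, with underlying sets $Y_\alpha\subseteq X$, and set $Y=\bigcup_{\alpha\in J}Y_\alpha$; then $\widetilde{Y}=\bigcup_{\alpha\in J}\widetilde{Y_\alpha}$ since $\widetilde{Y}(e)=Y=\bigcup_\alpha Y_\alpha$ for every $e\in E$. The non-null intersection hypothesis means $\bigcap_{\alpha\in J}\widetilde{Y_\alpha}\neq\Phi_E$, so I may fix a point $x_0\in\bigcap_{\alpha\in J}Y_\alpha$. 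Suppose, for contradiction, that $(F,E)$, $(G,E)$ is a soft separation of $\widetilde{Y}$ in the relative topology $\tau_Y$.

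First I would reduce to applying Proposition \ref{4} inside the ambient soft topological space $(Y,\tau_Y,E)$. Since $Y_\alpha\subseteq Y$ and both $Y_\alpha$ and $Y$ are subspaces of $X$, Lemma \ref{5} shows that each $(Y_\alpha,\tau_{Y_\alpha},E)$ is a soft subspace of $(Y,\tau_Y,E)$, carrying the same relative topology and hence remaining soft connected. Treating $Y$ as the ambient set in Proposition \ref{4}, I conclude that for every $\alpha$ the soft set $\widetilde{Y_\alpha}$ lies entirely within $(F,E)$ or entirely within $(G,E)$.

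Next I would use the common point $x_0$ to force a uniform choice. Fix one index $\beta$; without loss of generality $\widetilde{Y_\beta}\widetilde{\subseteq}(F,E)$, so that $x_0\in Y_\beta=\widetilde{Y_\beta}(e)\subseteq F(e)$ for every $e\in E$. If some $\widetilde{Y_\alpha}$ were instead contained in $(G,E)$, then likewise $x_0\in Y_\alpha\subseteq G(e)$ for every $e$, whence $x_0\in F(e)\cap G(e)$ at a fixed $e$; but the separation gives $(F,E)\cap(G,E)=\Phi_E$, i.e. $F(e)\cap G(e)=\emptyset$, a contradiction. Hence every $\widetilde{Y_\alpha}$ lies within $(F,E)$. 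Taking the union over $\alpha$ and using Proposition \ref{3}, I obtain $\widetilde{Y}=\bigcup_\alpha\widetilde{Y_\alpha}\widetilde{\subseteq}(F,E)$, so in particular $(G,E)\widetilde{\subseteq}(F,E)$; Proposition \ref{6}(ii) then yields $(G,E)=(G,E)\cap(F,E)=\Phi_E$, contradicting that $(G,E)$ is non-null. Therefore $\widetilde{Y}$ admits no soft separation and $(Y,\tau_Y,E)$ is soft connected.

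The step I expect to be the main obstacle is the consistency argument across the parameter set $E$. The conclusion of Proposition \ref{4} is a single global statement---$\widetilde{Y_\alpha}$ sits in one part or the other---and the whole argument turns on the common point $x_0$ forcing that global choice to be the \emph{same} part for all $\alpha$ simultaneously. The subtlety is that the contradiction $x_0\in F(e)\cap G(e)=\emptyset$ must be extracted at a fixed $e\in E$, relying on the fact that $\widetilde{Y_\alpha}$-containment holds for every $e$ at once; the supporting soft-set bookkeeping, namely the identity $\widetilde{Y}=\bigcup_\alpha\widetilde{Y_\alpha}$ and the containment manipulations via Propositions \ref{3} and \ref{6}, is routine but should be verified carefully.
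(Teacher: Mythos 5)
Your proof is correct and follows essentially the same route as the paper's: a contradiction argument that uses Lemma \ref{5} to view each $(Y_\alpha,\tau_{Y_\alpha},E)$ as a soft subspace of $(Y,\tau_Y,E)$, Proposition \ref{4} to place each $\widetilde{Y_\alpha}$ inside one piece of the separation, and the common point $x_0$ to force the same piece for all $\alpha$, whence the other piece is $\Phi_E$. Your intrinsic treatment of the separation in $\tau_Y$ and the clean final step via Proposition \ref{6}(ii) are only cosmetic improvements over the paper's version, which writes the separation as $\widetilde{Y}\cap(F,E)$, $\widetilde{Y}\cap(G,E)$ but argues identically.
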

\begin{proof}
Let $\{(Y_\alpha, \tau_{Y_\alpha}, E)\}_{\alpha\in J}$ be an arbitrary collection of soft connected soft subspace of $(X, \tau, E)$. Suppose to the contrary that there exists a soft separation of $\widetilde{Y}=\cup_{\alpha\in J}\widetilde{Y}_\alpha$, $\widetilde{Y}\cap (F, E)$, $\widetilde{Y}\cap (G, E)$. By the Proposition \ref{3}, we have $\widetilde{Y}=(\cup_{\alpha\in J}(F_\alpha, E))\cup(\cup_{\alpha\in J}(G_\alpha, E))$ where $F_\alpha(e)=F(e)\cap Y_\alpha$ and $G_\alpha(e)=G(e)\cap Y_\alpha$, for each $\alpha\in J$ and $e\in E$. Since $\cap_{\alpha\in J}\widetilde{Y}_\alpha\neq\Phi_E$, it is easy to see that $\cap_{\alpha\in J}Y_\alpha\neq\emptyset$, and $x\in\cap_{\alpha\in J}Y_\alpha$. On the other hand Lemma \ref{5} implies that $(Y_\alpha, \tau_\alpha, E)$ is a soft subspace of $(Y, \tau_Y, E)$, for each $\alpha\in J$. By Proposition \ref{4}, we can assume that $\widetilde{Y}_\alpha$ lies entirely within $\widetilde{Y}\cap (F, E)$. Let $\alpha'\in J-\{\alpha\}$. If $\widetilde{Y}_{\alpha'}\widetilde{\subseteq}\widetilde{Y}\cap(G, E)$, it is easy to see that $x\in Y\cap G(e)$, also $x\in Y\cap F(e)$, for each $\alpha\in J$. This is a contradiction. Therefore $\widetilde{Y}_{\alpha}\widetilde{\subseteq}\widetilde{Y}\cap(F, E)$, for each $\alpha\in J$. Now we can see that $\widetilde{Y}\widetilde{\subseteq}\widetilde{Y}\cap(F, E)$. Proposition of \cite{ZAMA} implies that $\widetilde{Y}\cap(G, E)\widetilde{\subseteq}\widetilde{Y}\cap(F, E)$ and $\Phi_E=\widetilde{Y}\cap(G, E)$. This is a contradiction. This completes the proof.
\end{proof}
\begin{definition}
Let $(X, \tau, E)$ be a soft topological space and $B\subseteq\tau$. If every element of $\tau$ can be written as a union of elements of $B$, then $B$ is called a soft basis for the soft topology $\tau$. Each element of $B$ is called a soft basis element.
\end{definition}
\begin{definition}
Let $(F, E_1)$ and $(G, E_2)$ be soft sets in $SS(X)_{E_1}$ and $SS(Y)_{E_2}$, respectively. Then the cartesian product of $(F, E_1)$ and $(G, E_2)$ denoted by $(F\times G, E_1\times E_2)$ in $SS(X\times Y)_{E_1\times E_2}$ is defined as $(F\times G)(e_1, e_2)=F(e_1)\times G(e_2)$.
\end{definition}
\begin{proposition}\label{babak1}
Let $(F_1,E_1),(G_1,E_1)\in SS(X)_{E_1}$ and $(F_2,E_2),(G_2,E_2)\in SS(Y)_{E_2}$. Then\\
(i) $\Phi_{E_1}\times(F_2,E_2)=(F_1,E_1)\times\Phi_{E_2}=\Phi_{E_1\times E_2}$,\\
(ii)
$((F_1,E_1)\times(F_2,E_2))\cap((G_1,E_1)\times(G_2,E_2))=((F_1,E_1)\cap(G_1,E_1))\times((F_2,E_2)\cap(G_2,E_2))$.
\end{proposition}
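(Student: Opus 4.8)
The plan is to verify both identities pointwise, that is, by evaluating each soft set at an arbitrary index $(e_1,e_2)\in E_1\times E_2$ and appealing to the definition of the Cartesian product of soft sets together with elementary properties of the set-theoretic product. Since every soft set appearing in the statement lives in $SS(X\times Y)_{E_1\times E_2}$, two such soft sets coincide exactly when they agree at each $(e_1,e_2)$, so this reduction is legitimate.

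For part (i), I would start from the definition $(\Phi_{E_1}\times(F_2,E_2))(e_1,e_2)=\Phi_{E_1}(e_1)\times F_2(e_2)$. Since $\Phi_{E_1}(e_1)=\emptyset$ for every $e_1\in E_1$, and $\emptyset\times S=\emptyset$ for any set $S$, this product is empty for every $(e_1,e_2)$. The same argument applied to $((F_1,E_1)\times\Phi_{E_2})(e_1,e_2)=F_1(e_1)\times\Phi_{E_2}(e_2)=F_1(e_1)\times\emptyset=\emptyset$ shows that the second product is also identically empty. As $\Phi_{E_1\times E_2}(e_1,e_2)=\emptyset$ by the definition of the null soft set, all three soft sets agree at every index, giving the claimed equalities.

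For part (ii), I would evaluate the left-hand side at $(e_1,e_2)$ using the definition of soft intersection and of the Cartesian product, obtaining $(F_1(e_1)\times F_2(e_2))\cap(G_1(e_1)\times G_2(e_2))$. Evaluating the right-hand side similarly yields $(F_1(e_1)\cap G_1(e_1))\times(F_2(e_2)\cap G_2(e_2))$. The equality of the two soft sets therefore reduces to the elementary set identity $(A\times B)\cap(C\times D)=(A\cap C)\times(B\cap D)$, applied with $A=F_1(e_1)$, $B=F_2(e_2)$, $C=G_1(e_1)$, $D=G_2(e_2)$.

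The only nontrivial ingredient is this last set identity, which is the single point deserving care, and it follows from a short chain of equivalences: $(x,y)\in(A\times B)\cap(C\times D)$ iff $x\in A$, $y\in B$, $x\in C$ and $y\in D$, iff $x\in A\cap C$ and $y\in B\cap D$, iff $(x,y)\in(A\cap C)\times(B\cap D)$. Once this is in hand the proposition follows at once, since the soft sets then agree factor by factor at each index. I do not anticipate any genuine obstacle; the entire argument is a matter of unwinding the definitions pointwise and invoking this standard fact about products of sets.
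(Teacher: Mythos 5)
Your proposal is correct and follows essentially the same route as the paper's proof: both verify the identities pointwise at each $(e_1,e_2)\in E_1\times E_2$, reducing (i) to $\emptyset\times S=S\times\emptyset=\emptyset$ and (ii) to the set identity $(A\times B)\cap(C\times D)=(A\cap C)\times(B\cap D)$. Your only addition is that you spell out the elementwise proof of that last identity, which the paper invokes without comment.
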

\begin{proof}
(i) Let $\Phi_{E_1}=(\phi_1,E_1)$ and $\Phi_{E_2}=(\phi_2,E_2)$.
\begin{eqnarray}
(F_1\times\phi_2)(e_1,e_2)\!\!\!\!&=&\!\!\!\!F_1(e_1)\times\phi_2(e_2)=F_1(e_1)\times\emptyset=\emptyset\nonumber\\
\!\!\!\!&&\!\!\!\!=\emptyset\times F_2(e_2)=\phi_1(e_1)\times
F_2(e_2)=(\phi_1\times F_2)(e_1,e_2).\nonumber
\end{eqnarray}
This implies (i).\\
(ii) Let $(F_1\times F_2,E_1\times E_2)\cap(G_1\times
G_2,E_1\times E_2)=(H,E_1\times E_2)$,
$(F_1,E_1)\cap(G_1,E_1)=(I,E_1)$ and
$(F_2,E_2)\cap(G_2,E_2)=(J,E_2)$. Then
\begin{eqnarray}
H(e_1,e_2)\!\!\!\!&=&\!\!\!\!(F_1\times F_2)(e_1,e_2)\cap(G_1\times G_2)(e_1,e_2)=\nonumber\\
\!\!\!\!&&\!\!\!\!(F_1(e_1)\times F_2(e_2))\cap(G_1(e_1)\times G_2(e_2))=\nonumber\\
\!\!\!\!&&\!\!\!\!(F_1(e_1)\cap G_1(e_1))\times(F_2(e_2)\cap G_2(e_2))=\nonumber\\
\!\!\!\!&&\!\!\!\!I(e_1)\times J(e_2)=(I\times
J)(e_1,e_2).\nonumber
\end{eqnarray}
Therefore, $(H,E_1\times E_2)=(I,E_1)\times(J,E_2)$.
\end{proof}
\begin{proposition}
Let $(X, \tau_1, E_1)$ and $(Y, \tau_2, E_2)$ be soft spaces. Let $B=\{(F, E_1)\times (G, E_2)| (F, E_1)\in\tau_1,\ (G, E_2)\in\tau_2\}$ and $\tau$ be the collection of all arbitrary union of elements of $B$. Then $\tau$ is a soft topology over $X\times Y$.
\end{proposition}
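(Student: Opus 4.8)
The plan is to verify the three axioms in the definition of a soft topology for the collection $\tau$ on $X\times Y$ with parameter set $E_1\times E_2$: that it contains $\Phi_{E_1\times E_2}$ and $\widetilde{X\times Y}$, that it is closed under arbitrary soft unions, and that it is closed under finite soft intersections. Since $\tau$ is defined as the set of all unions of members of $B$, the first two axioms should follow almost immediately, and I expect the intersection axiom to be the only substantive step.

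First I would show that $\Phi_{E_1\times E_2}$ and $\widetilde{X\times Y}$ already lie in $B$, hence in $\tau$. Using Proposition \ref{babak1}(i) with $\Phi_{E_1}\in\tau_1$ gives $\Phi_{E_1}\times(G,E_2)=\Phi_{E_1\times E_2}$ for any $(G,E_2)\in\tau_2$, so the null soft set is a basis element. For the whole space I would check directly from the definition of the cartesian product that $\widetilde{X}\times\widetilde{Y}=\widetilde{X\times Y}$, since $(\widetilde{X}\times\widetilde{Y})(e_1,e_2)=X\times Y$; as $\widetilde{X}\in\tau_1$ and $\widetilde{Y}\in\tau_2$, this too is a basis element of $B$.

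Next, closure under arbitrary unions. If $\{U_\lambda\}_{\lambda\in\Lambda}\subseteq\tau$, then each $U_\lambda$ is itself a union of members of $B$, so $\bigcup_{\lambda\in\Lambda}U_\lambda$ is again a union of members of $B$ and therefore lies in $\tau$. This uses only the associativity of the soft union, that is, Proposition \ref{7}(ii) applied in its iterated form.

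The main step, and the one I expect to be the real obstacle, is closure under intersection. Given $U,V\in\tau$, write $U=\bigcup_{\alpha}\big((F_\alpha,E_1)\times(G_\alpha,E_2)\big)$ and $V=\bigcup_{\beta}\big((F'_\beta,E_1)\times(G'_\beta,E_2)\big)$ with all factors soft open. Applying the soft distributive law (Proposition \ref{6}(i) together with Proposition \ref{3}(i)) twice, I would expand $U\cap V=\bigcup_{\alpha,\beta}\big[\big((F_\alpha,E_1)\times(G_\alpha,E_2)\big)\cap\big((F'_\beta,E_1)\times(G'_\beta,E_2)\big)\big]$. Proposition \ref{babak1}(ii) then rewrites each summand as $\big((F_\alpha,E_1)\cap(F'_\beta,E_1)\big)\times\big((G_\alpha,E_2)\cap(G'_\beta,E_2)\big)$. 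Since $\tau_1$ and $\tau_2$ are soft topologies, the two factors are soft open in $X$ and in $Y$ respectively, so each summand belongs to $B$ and $U\cap V$ is a union of basis elements, i.e. $U\cap V\in\tau$. The care needed here is purely in justifying that the soft distributive law extends from two sets to an indexed double union, which is where I would concentrate the remaining effort.
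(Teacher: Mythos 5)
Your proposal is correct and follows essentially the same route as the paper: both verify the null and absolute soft sets via Proposition \ref{babak1}(i) and a direct check that $\widetilde{X}\times\widetilde{Y}=\widetilde{X\times Y}$, and both handle intersections by expanding $U\cap V$ into the double union $\bigcup_{\alpha,\beta}\big(((F_\alpha,E_1)\cap(F'_\beta,E_1))\times((G_\alpha,E_2)\cap(G'_\beta,E_2))\big)$ of basis elements. The only cosmetic difference is that you distribute at the soft-set level by citing Propositions \ref{6}(i), \ref{3}(i) and \ref{babak1}(ii), whereas the paper carries out the same distribution pointwise on $H(e_1,e_2)$ for each parameter $(e_1,e_2)\in E_1\times E_2$.
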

\begin{proof}
We have $\Phi_{E_1}=(\phi_1, E_1)\in\tau_1$ and $\Phi_{E_2}=(\phi_2, E_2)\in\tau_2$. Then, by Proposition \ref{babak1}, $\Phi_{E_1}\times\Phi_{E_2}=\Phi_{E_1\times E_2}\in\tau$. Moreover $\widetilde{X}=(X,E_1)\in\tau_1$ and $\widetilde{Y}=(Y,E_2)\in\tau_2$. Then $\widetilde{X}\times\widetilde{Y}=(X\times Y,E_1\times E_2)$ such that, $(X\times Y)(e_1,e_2)=X(e_1)\times Y(e_2)=X\times Y$, for each $(e_1,e_2)\in E_1\times E_2$. Therefore, $\widetilde{X}\times\widetilde{Y}=\widetilde{X\times Y}\in\tau$. Let $(F,E_1\times E_2)$, $(G,E_1\times E_2)\in\tau$. There exist the elements $(F_\alpha,E_1)\times(G_\alpha,E_2)$, $(F_\beta,E_1)\times(G_\beta,E_2)$, $\alpha\in I$, $\beta\in J$, of $B$ such that $(F,E_1\times E_2)=\bigcup_{\alpha\in I}((F_\alpha\times G_\alpha,E_1\times E_2))$ and $(G,E_1\times E_2)=\bigcup_{\beta\in J}((F_\beta\times G_\beta,E_1\times E_2))$. Let $(H,E_1\times E_2)=(F,E_1\times E_2)\cap(G,E_1\times E_2)$. Then, we have
\begin{eqnarray*}
H(e_1,e_2)\!\!\!\!&=&\!\!\!\!F(e_1,e_2)\cap G(e_1,e_2)\nonumber\\
\!\!\!\!&=&\!\!\!\!(\bigcup_{\alpha\in I}(F_\alpha(e_1)\times G_\alpha(e_2)))\cap(\bigcup_{\beta\in J}(F_\beta(e_1)\times G_\beta(e_2)))\nonumber\\
\!\!\!\!&=&\!\!\!\!\bigcup_{\beta\in J}\Big[(\bigcup_{\alpha\in J}(F_\alpha(e_1)\times G_\alpha(e_2)))\cap(F_\beta(e_1)\times G_\beta(e_2))\Big]\nonumber\\
\!\!\!\!&=&\!\!\!\!\bigcup_{\beta\in J}\bigcup_{\alpha\in I}((F_\alpha(e_1)\times G_\alpha(e_2))\cap(F_\beta(e_1)\times G_\beta(e_2)))\nonumber\\
\!\!\!\!&=&\!\!\!\!\bigcup_{\beta\in J}\bigcup_{\alpha\in I}((F_\alpha(e_1)\cap F_\beta(e_1))\times(G_\alpha(e_2)\cap G_\beta(e_2)))\nonumber\\
\!\!\!\!&=&\!\!\!\!\bigcup_{\alpha\in I,\beta\in J}((F_\alpha\cap F_\beta)(e_1)\times(G_\alpha\cap G_\beta)(e_2))\nonumber\\
\!\!\!\!&=&\!\!\!\!\bigcup_{\alpha\in I,\beta\in J}(F_\alpha\cap F_\beta\times G_\alpha\cap G_\beta)(e_1,e_2).\nonumber
\end{eqnarray*}
This shows that
\begin{eqnarray*}
(H,E_1\times E_2)\!\!\!\!&=&\!\!\!\!\bigcup_{\alpha\in I,\beta\in J}((F_\alpha\cap F_\beta)\times(G_\alpha\cap G_\beta),E_1\times E_2)=\nonumber\\
\!\!\!\!&&\!\!\!\!
\bigcup_{\alpha\in I,\beta\in J}((F_\alpha\cap F_\beta,E_1)\times(G_\alpha\cap G_\beta,E_2)).
\end{eqnarray*}
This implies that $(H,E_1\times E_2)\in\tau$. Finally, It is obvious  that an arbitrary union of elements of $B$ is an elements in $\tau$. This completes the proof.
\end{proof}
\begin{definition}
Let $(X,\tau_1,E_1)$ and $(Y,\tau_2,E_2)$ be soft spaces. Then the soft space $(X\times Y,\tau,E_1\times E_2)$ as
defined in previous proposition is called soft product topological space over $X\times Y$.
\end{definition}
\begin{proposition}\label{babak2}
Let $(F,E_1)$ and $(G,E_2)$ be soft sets in $SS(X)_{E_1}$ and $SS(Y)_{E_2}$, respectively. Then,
\[
((F,E_1)\times(G,E_2))'=((F,E_1)'\times \widetilde{Y})\cup(\widetilde{X}\times(G,E_2)').
\]
\end{proposition}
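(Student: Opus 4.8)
The plan is to prove the claimed identity of soft sets by evaluating both sides at an arbitrary parameter $(e_1,e_2)\in E_1\times E_2$ and checking that the resulting subsets of the universe $X\times Y$ coincide; since two soft sets over $X\times Y$ with parameter set $E_1\times E_2$ are equal precisely when their values agree at every parameter, this reduces the statement to a single set-theoretic identity.

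First I would unwind the left-hand side. Writing $((F,E_1)\times(G,E_2))'=(H,E_1\times E_2)$, the definition of the cartesian product of soft sets together with the definition of the complement in $SS(X\times Y)_{E_1\times E_2}$ gives
\[
H(e_1,e_2)=(X\times Y)-(F\times G)(e_1,e_2)=(X\times Y)-\big(F(e_1)\times G(e_2)\big).
\]
Next I would unwind the right-hand side. Setting $(K,E_1\times E_2)=((F,E_1)'\times\widetilde{Y})\cup(\widetilde{X}\times(G,E_2)')$ and using $\widetilde{X}(e_1)=X$, $\widetilde{Y}(e_2)=Y$, $F'(e_1)=X-F(e_1)$ and $G'(e_2)=Y-G(e_2)$, the product and union definitions yield
\[
K(e_1,e_2)=\big((X-F(e_1))\times Y\big)\cup\big(X\times(Y-G(e_2))\big).
\]

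It then remains to verify, for fixed subsets $A=F(e_1)\subseteq X$ and $B=G(e_2)\subseteq Y$, the purely set-theoretic identity
\[
(X\times Y)-(A\times B)=\big((X-A)\times Y\big)\cup\big(X\times(Y-B)\big).
\]
I would establish this by a membership argument: a pair $(x,y)\in X\times Y$ fails to lie in $A\times B$ exactly when $x\notin A$ or $y\notin B$, which is precisely the condition that $(x,y)$ lie in $(X-A)\times Y$ or in $X\times(Y-B)$. This is the De~Morgan law for complements of cartesian products. Combining the three displays gives $H(e_1,e_2)=K(e_1,e_2)$ for all $(e_1,e_2)$, hence $(H,E_1\times E_2)=(K,E_1\times E_2)$, which is the assertion.

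I do not anticipate a genuine obstacle here: the argument is a routine pointwise verification. The only point requiring a little care is the bookkeeping — making sure that the complement on the left is taken in the universe $X\times Y$ rather than factorwise, and that the two summands on the right are genuinely soft sets over $X\times Y$ sharing the common parameter set $E_1\times E_2$, so that their union is formed parameterwise and the final comparison of values at each $(e_1,e_2)$ is legitimate.
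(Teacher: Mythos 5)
Your proposal is correct and follows essentially the same route as the paper: both compute the two sides pointwise at an arbitrary parameter $(e_1,e_2)\in E_1\times E_2$ and reduce the claim to the set identity $(X\times Y)-(F(e_1)\times G(e_2))=((X-F(e_1))\times Y)\cup(X\times(Y-G(e_2)))$. The only difference is that you spell out the membership argument for this De Morgan-type identity, which the paper simply asserts in a single display.
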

\begin{proof}
Let $(F\times G,E_1\times E_2)'=((F\times G)',E_1\times E_2)$. Then,
\begin{equation}
(F\times G)'(e_1,e_2)=(X\times Y)-(F(e_1)\times G(e_2))=((X-F(e_1))\times Y)\cup(X\times(Y-G(e_2))).\nonumber
\end{equation}
On the other hand,
\begin{equation}
((F,E_1)'\times\widetilde{Y})\cup(\widetilde{X}\times(G,E_2)')=(F'\times Y,E_1\times E_2)\cup(X\times G',E_1\times E_2).\nonumber
\end{equation}
If we denote this soft set by $(H,E_1\times E_2)$ we have,
\begin{eqnarray}
H(e_1,e_2)\!\!\!\!&=&\!\!\!\!(F'\times Y)(e_1,e_2)\cup(X\times G')(e_1,e_2)=(F'(e_1)\times Y)\cup(X\times G'(e_2))\nonumber\\
\!\!\!\!&&\!\!\!\!=((X-F(e_1))\times Y)\cup(X\times(Y-G(e_2))).\nonumber
\end{eqnarray}
This completes the proof.
\end{proof}
\begin{cor}
Let $(F,E_1)$ and $(G,E_2)$ be soft closed set in soft topological spaces $(X,\tau_1,E_1)$ and $(Y,\tau_2,E_2)$, respectively. Then $(F,E_1)\times(G,E_2)$ is soft closed set in soft product space $(X\times Y,\tau,E_1\times E_2)$.
\end{cor}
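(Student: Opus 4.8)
The plan is to reduce the statement to soft openness of the complement. By definition a soft set is soft closed precisely when its complement is soft open, so it suffices to show that $((F,E_1)\times(G,E_2))'$ is soft open in the product space $(X\times Y,\tau,E_1\times E_2)$. The key is that the previous proposition already computes this complement explicitly, so the work reduces to recognizing the resulting expression as a union of soft open sets.

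First I would invoke Proposition \ref{babak2} to rewrite the complement as
\[
((F,E_1)\times(G,E_2))'=((F,E_1)'\times \widetilde{Y})\cup(\widetilde{X}\times(G,E_2)').
\]
Since $(F,E_1)$ and $(G,E_2)$ are soft closed over $X$ and $Y$ respectively, their complements $(F,E_1)'\in\tau_1$ and $(G,E_2)'\in\tau_2$ are soft open. Moreover $\widetilde{X}\in\tau_1$ and $\widetilde{Y}\in\tau_2$, as the whole space is always a soft open set. Hence each of the two summands above is a cartesian product of a soft open set over $X$ with a soft open set over $Y$, and therefore belongs to the basis $B=\{(F,E_1)\times(G,E_2)\mid (F,E_1)\in\tau_1,\ (G,E_2)\in\tau_2\}$ of the product topology $\tau$.

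Finally, because $\tau$ was defined as the collection of all arbitrary unions of elements of $B$, any union of basis elements lies in $\tau$; in particular the union of the two soft open summands is again soft open. Thus $((F,E_1)\times(G,E_2))'\in\tau$, which means $(F,E_1)\times(G,E_2)$ is soft closed in $(X\times Y,\tau,E_1\times E_2)$, as required.

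There is no serious obstacle here: the entire argument rests on Proposition \ref{babak2}, and the only points needing a line of justification are that the whole spaces $\widetilde{X}$ and $\widetilde{Y}$ are soft open and that both summands are genuine basis elements of $\tau$. Once those are noted, closure of $\tau$ under unions delivers the result immediately.
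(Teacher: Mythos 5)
Your proof is correct and follows essentially the same route as the paper: both invoke Proposition \ref{babak2} to rewrite the complement as $((F,E_1)'\times \widetilde{Y})\cup(\widetilde{X}\times(G,E_2)')$ and observe that this is a union of products of soft open sets, hence soft open in the product topology. Your version merely spells out explicitly (and correctly) why each summand is a basis element of $\tau$ and why their union lies in $\tau$, details the paper leaves implicit.
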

\begin{proof}
It is obvious that $(F,E_1)'$, $\widetilde{X}$ are soft open sets in $(X,\tau_1,E_1)$ and $(G,E_2)'$, $\widetilde{Y}$ are soft open sets in $(Y,\tau_2,E_2)$. Now, Proposition \ref{babak2} implies that $((F,E_1)\times(G,E_2))'$ is soft open in $(X\times Y,\tau,E_1\times E_2)$. This completes the proof.
\end{proof}
\begin{definition}
Let $(X,\tau,E)$ be a soft topological space over $X$ and $x,y\in X$ such that $x\neq y$. If there exist soft open sets $(F,E)$ and $(G,E)$ such that $x\in(F,E), y\in(G,E)$ and $(F,E)\cap(G,E)=\Phi_E$, then $(X,\tau,E)$ is called a soft $T_2$- space a soft Hausdorff.
\end{definition}
\begin{proposition}\label{10}
Let $(F,E_1)$ and $(G,E_2)$ be soft sets in $SS(X)_{E_1}$ and
$SS(Y)_{E_2}$, respectively. If $x\in(F,E_1)$ and $y\in(G,E_2)$.
then $(x,y)\in(F,E_1)\times(G,E_2)$, and vice versa.
\end{proposition}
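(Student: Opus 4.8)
The plan is to reduce the statement to the elementary set-theoretic fact that an ordered pair $(x,y)$ lies in a Cartesian product $A \times B$ of ordinary sets precisely when $x \in A$ and $y \in B$, and then to transport this through the parameter-wise definitions of point membership and of the soft product. First I would fix the convention for point membership that is implicit in the soft $T_2$ definition above: a point $x$ belongs to a soft set $(F,E)$, written $x \in (F,E)$, exactly when $x \in F(e)$ for every $e \in E$. With this in hand, I would unfold the soft product: by definition $(F,E_1) \times (G,E_2) = (F \times G, E_1 \times E_2)$ with $(F \times G)(e_1,e_2) = F(e_1) \times G(e_2)$, so that the assertion $(x,y) \in (F,E_1) \times (G,E_2)$ means $(x,y) \in F(e_1) \times G(e_2)$ for every pair $(e_1,e_2) \in E_1 \times E_2$.

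For the forward implication I would assume $x \in (F,E_1)$ and $y \in (G,E_2)$, so that $x \in F(e_1)$ for all $e_1 \in E_1$ and $y \in G(e_2)$ for all $e_2 \in E_2$. Then for an arbitrary pair $(e_1,e_2)$ both memberships hold simultaneously, whence $(x,y) \in F(e_1) \times G(e_2) = (F \times G)(e_1,e_2)$; since the pair was arbitrary this yields $(x,y) \in (F,E_1) \times (G,E_2)$. For the converse I would start from the hypothesis that $(x,y) \in (F \times G)(e_1,e_2) = F(e_1) \times G(e_2)$ for all $(e_1,e_2) \in E_1 \times E_2$. Fixing an arbitrary $e_1 \in E_1$ and any $e_2 \in E_2$, the membership $(x,y) \in F(e_1) \times G(e_2)$ forces $x \in F(e_1)$; since $e_1$ was arbitrary this gives $x \in (F,E_1)$, and $y \in (G,E_2)$ follows by the symmetric argument.

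The only point that requires a moment of care --- and the closest thing to an obstacle --- is the quantifier bookkeeping in the converse: extracting $x \in F(e_1)$ from the product membership uses that the relevant second factor $G(e_2)$ is inhabited, so that the pair genuinely witnesses the first coordinate. This is guaranteed as soon as the parameter set $E_2$ is nonempty and $(x,y)$ actually lies in the product. Since the index sets $E_1, E_2$ are taken nonempty throughout, the independent quantifiers over $e_1$ and $e_2$ decouple cleanly and no further hypotheses are needed; everything else is a direct application of the equivalence $(x,y) \in A \times B \iff (x \in A \text{ and } y \in B)$ carried out pointwise in the parameters. I would remark that the argument is insensitive to whether one reads point membership with a universal or an existential quantifier over $e$, since in either case the quantifier over $e_1$ and that over $e_2$ range over disjoint variables and therefore distribute over the conjunction.
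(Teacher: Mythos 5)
Your proposal is correct and matches the paper's argument in substance: the paper uses the same universal-quantifier membership convention, writing $x\in\bigcap_{e_1\in E_1}F(e_1)$ and $y\in\bigcap_{e_2\in E_2}G(e_2)$, and invokes the identity $(\bigcap_{e_1}F(e_1))\times(\bigcap_{e_2}G(e_2))=\bigcap_{(e_1,e_2)}(F\times G)(e_1,e_2)$, which is exactly your pointwise quantifier argument in compressed form. Your explicit handling of the converse, including the observation that extracting $x\in F(e_1)$ requires a nonempty $E_2$ to witness the second coordinate, is in fact more careful than the paper's ``Conversely is similar.''
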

\begin{proof}
By the hypothesis $x\in\bigcap_{e_1\in E_1}F(e_1)$ and $y\in\bigcap_{e_2\in E_2}G(e_2)$. Therefore,
\begin{eqnarray}
(x,y)\in(\bigcap_{e_1\in E_1}F(e_1))\times(\bigcap_{e_2\in E_2}G(e_2))\!\!\!\!&=&\!\!\!\!\bigcap_{(e_1,e_2)\in E_1\times E_2}(F(e_1)\times G(e_2))\nonumber\\
\!\!\!\!&&\!\!\!\!=\bigcap_{(e_1,e_2)\in E_1\times E_2}(F\times
G)(e_1,e_2).\nonumber
\end{eqnarray}
This shows that $(x,y)\in(F,E_1)\times(G,E_2)$. Conversely is
similar.
\end{proof}
\begin{proposition}
The product of two soft Hausdorff spaces is soft Hausdorff.
\end{proposition}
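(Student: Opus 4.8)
The plan is to reduce soft Hausdorffness of the product to that of the two factors, separating an arbitrary pair of distinct points of $X\times Y$ by \emph{basic} soft open sets of the product topology. Let $(X,\tau_1,E_1)$ and $(Y,\tau_2,E_2)$ be soft Hausdorff and form the soft product $(X\times Y,\tau,E_1\times E_2)$. I would take two distinct points $(x_1,y_1)\neq(x_2,y_2)$ in $X\times Y$; then either $x_1\neq x_2$ or $y_1\neq y_2$, and by symmetry it suffices to treat the first case.

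Since $x_1\neq x_2$ and $(X,\tau_1,E_1)$ is soft Hausdorff, I would first produce soft open sets $(F,E_1),(G,E_1)\in\tau_1$ with $x_1\in(F,E_1)$, $x_2\in(G,E_1)$ and $(F,E_1)\cap(G,E_1)=\Phi_{E_1}$. The candidate separating sets in the product are $(F,E_1)\times\widetilde{Y}$ and $(G,E_1)\times\widetilde{Y}$. Because $\widetilde{Y}=(Y,E_2)\in\tau_2$, both are elements of the basis $B$ of the product topology, hence lie in $\tau$, so they are soft open over $X\times Y$.

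Next I would check the three defining conditions of a soft $T_2$-space. Since $y_1,y_2\in\widetilde{Y}$ trivially, Proposition \ref{10} gives $(x_1,y_1)\in(F,E_1)\times\widetilde{Y}$ and $(x_2,y_2)\in(G,E_1)\times\widetilde{Y}$. For disjointness, Proposition \ref{babak1}(ii) yields
\[
\bigl((F,E_1)\times\widetilde{Y}\bigr)\cap\bigl((G,E_1)\times\widetilde{Y}\bigr)=\bigl((F,E_1)\cap(G,E_1)\bigr)\times\bigl(\widetilde{Y}\cap\widetilde{Y}\bigr)=\Phi_{E_1}\times\widetilde{Y},
\]
and Proposition \ref{babak1}(i) collapses the right-hand side to $\Phi_{E_1\times E_2}$. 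Thus these two basic soft open sets separate $(x_1,y_1)$ from $(x_2,y_2)$.

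Finally, the case $y_1\neq y_2$ is entirely symmetric, using the soft Hausdorffness of $(Y,\tau_2,E_2)$ together with the basic open sets $\widetilde{X}\times(F,E_2)$ and $\widetilde{X}\times(G,E_2)$, and the same two parts of Proposition \ref{babak1}. There is no real obstacle here; the only points requiring care are the bookkeeping of which factor carries the separation and the observation that the whole space $\widetilde{Y}$ (resp. $\widetilde{X}$) is an admissible factor of a basic open set because it is always soft open. Assembling the two cases shows that $(X\times Y,\tau,E_1\times E_2)$ is soft Hausdorff.
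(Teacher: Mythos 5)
Your proposal is correct and follows essentially the same route as the paper: both reduce to the case $x_1\neq x_2$ without loss of generality, separate with the basic soft open sets $(F,E_1)\times\widetilde{Y}$ and $(G,E_1)\times\widetilde{Y}$, place the points using Proposition \ref{10}, and obtain disjointness from Proposition \ref{babak1}. Your version is slightly more explicit about the symmetric case and about collapsing $\Phi_{E_1}\times\widetilde{Y}$ to $\Phi_{E_1\times E_2}$ via Proposition \ref{babak1}(i), but the argument is the same.
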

\begin{proof}
Let $(X,\tau_1,E_1)$ and $(Y,\tau_2,E_2)$ be soft Hausdorff spaces. we consider distinct points $(x_1,y_1)$ and $(x_2,y_2)$ of $X\times Y$. Without loss of generality let $x_1\neq x_2$. Then there exist soft open sets $(F,E_1)$ and $(G,E_1)$ in $(X,\tau,E_1)$ such that $x_1\in(F,E_1), x_2\in(G,E_1)$ and $(F,E_1)\cap(G,E_1)=\Phi_{E_1}$.
By Proposition \ref{10} we have,
$(x_1,y_1)\in(F,E_1)\times\widetilde{Y}$ and
$(x_2,y_2)\in(G,E_1)\times\widetilde{Y}$. These soft sets are soft
open in $(X\times Y,\tau,E_1\times E_2)$. Finally Proposition
\ref{babak1} shows that
\begin{equation}
((F,E_1)\times\widetilde{Y})\cap((G,E_1)\times\widetilde{Y})=\Phi_{E_1\times E_2},\nonumber
\end{equation}
 and this completes the proof.
\end{proof}
\begin{proposition}\label{8}
Let $\{(F_\alpha, B)\}_{\alpha\in J}$ be an arbitrary family of
soft sets in $SS(V)_B$. Then, $f^{-1}_{pu}(\cup_{\alpha\in
J}(F_\alpha, B))=\cup_{\alpha\in J}f^{-1}_{pu}(F_\alpha, B)$
\end{proposition}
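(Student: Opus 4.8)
The plan is to mirror the argument of Proposition \ref{babak}(i), now carried out for an arbitrary index set $J$ in place of a pair of soft sets. The only additional ingredient needed is the standard fact that the preimage under an ordinary map commutes with unions of any cardinality, so the whole proof reduces to unwinding the two definitions and comparing the resulting set-valued functions on $A$.

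First I would name the soft set on the left. Setting $\cup_{\alpha\in J}(F_\alpha, B)=(H, B)$, the definition of the union of soft sets gives $H(y)=\cup_{\alpha\in J}F_\alpha(y)$ for each $y\in B$. Applying the definition of the inverse image, we get $f^{-1}_{pu}(H, B)=(f^{-1}_{pu}(H), A)$ with
\[
f^{-1}_{pu}(H)(x)=u^{-1}(H(p(x)))=u^{-1}\Big(\cup_{\alpha\in J}F_\alpha(p(x))\Big)
\]
for each $x\in A$. Next I would compute the right-hand side: writing $\cup_{\alpha\in J}f^{-1}_{pu}(F_\alpha, B)=(O, A)$, the definitions of soft union and inverse image give
\[
O(x)=\cup_{\alpha\in J}f^{-1}_{pu}(F_\alpha)(x)=\cup_{\alpha\in J}u^{-1}(F_\alpha(p(x)))
\]
for each $x\in A$.

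The single point that carries the proof is the set-theoretic identity $u^{-1}\big(\cup_{\alpha\in J}S_\alpha\big)=\cup_{\alpha\in J}u^{-1}(S_\alpha)$, valid for the ordinary map $u:U\longrightarrow V$ and any family $\{S_\alpha\}_{\alpha\in J}$ of subsets of $V$. Taking $S_\alpha=F_\alpha(p(x))$ yields $f^{-1}_{pu}(H)(x)=O(x)$ for every $x\in A$, whence $(H, B)$ and $(O, A)$ are the same soft set and the claimed equality follows. I expect no genuine obstacle here: unlike the intersection situation of Proposition \ref{babak}(iii), where only an inclusion holds because the forward image operator $f_{pu}$ fails to preserve intersections, the inverse image behaves perfectly with respect to arbitrary unions, so the passage from the finite case to an arbitrary index set $J$ is immediate and requires no restriction on $J$.
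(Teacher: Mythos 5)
Your proposal is correct and matches the paper's own proof essentially line for line: both sides unwind the definitions of soft union and of $f^{-1}_{pu}$ pointwise over $A$, and the key step in each case is the set-theoretic identity that $u^{-1}$ commutes with arbitrary unions. No further comment is needed.
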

\begin{proof}
Let $\cup_{\alpha\in J}(F_\alpha, B)=(F, B)$, where
$F(b)=\cup_{\alpha\in J}F_\alpha(b)$, for each $b\in B$. Then
$f^{-1}_{pu}(F, B)=(f^{-1}_{pu}(F), A)$, where
\[
f^{-1}_{pu}(F)(a)=u^{-1}(F(p(a)))=u^{-1}(\cup_{\alpha\in
J}F_\alpha(p(a)))=\cup_{\alpha\in J}u^{-1}(F_\alpha(p(a))),
\]
for each $a\in A$. On the other hand if 
\[
\cup_{\alpha\in J}f^{-1}_{pu}(F_\alpha, B)=\cup_{\alpha\in
J}(f^{-1}_{pu}(F_\alpha), A)=(G, A),
\]
then,
\[
G(a)=\cup_{\alpha\in J}f^{-1}_{pu}(F_\alpha)(a)=\cup_{\alpha\in
J}u^{-1}(F_\alpha(p(a))),
\]
for each $a\in A$. This completes the proof.
\end{proof}
\begin{lemma}
Let the soft topological space $(V, \tau', B)$ is given by soft
basis $B$. Then to prove soft $pu$- continuity of $f_{pu}$ it
suffices to show that the inverse image of every soft basis
element is soft open.
\end{lemma}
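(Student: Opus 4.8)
The plan is to reduce the general continuity condition to the basis hypothesis by using that the inverse-image operator $f^{-1}_{pu}$ commutes with arbitrary unions, which is precisely the content of Proposition \ref{8}. First I would take an arbitrary soft open set $(F,B)\in\tau'$. By the definition of a soft basis, every member of $\tau'$ is a union of soft basis elements, so I would fix such a representation $(F,B)=\bigcup_{\alpha\in J}(F_\alpha,B)$ in which each $(F_\alpha,B)$ is a soft basis element.

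Next I would apply Proposition \ref{8} to this representation to move the inverse image past the union, giving
\[
f^{-1}_{pu}(F,B)=f^{-1}_{pu}\Big(\bigcup_{\alpha\in J}(F_\alpha,B)\Big)=\bigcup_{\alpha\in J}f^{-1}_{pu}(F_\alpha,B).
\]
By the standing hypothesis each $f^{-1}_{pu}(F_\alpha,B)$ is soft open in $(U,\tau,A)$, i.e.\ lies in $\tau$. Since a soft topology is closed under arbitrary unions, the union on the right-hand side again belongs to $\tau$. Hence $f^{-1}_{pu}(F,B)\in\tau$ for every $(F,B)\in\tau'$, which is exactly the definition of soft $pu$-continuity of $f_{pu}$.

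I do not expect a genuine obstacle in this argument, since all the real work is carried by Proposition \ref{8}; the remaining ingredients are merely the definition of a soft basis and the union axiom for $\tau$. The only point deserving a little care is to allow the index set $J$ to be arbitrary, so that Proposition \ref{8} is invoked in the full generality in which it was proved; granting this, the equalities above hold and the conclusion follows at once.
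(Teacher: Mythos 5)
Your proposal is correct and matches the paper's proof essentially verbatim: both decompose the soft open set $(F,B)$ as a union of soft basis elements, apply Proposition \ref{8} to commute $f^{-1}_{pu}$ with the union, and conclude via the hypothesis and the union axiom of $\tau$. Your version is in fact slightly more careful than the paper's, since it spells out the closure of $\tau$ under arbitrary unions, which the paper leaves implicit.
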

\begin{proof}
We consider $f_{pu}: SS(U)_A\longrightarrow SS(V)_B$. Let $(F, B)$
be a soft open set in soft space $(V, \tau', B)$. We can write
$(F, B)=\cup_{\alpha\in J}(F_\alpha, B)$, where $B=\{(F_\beta,
B)\}_{\beta\in I}$ is a soft basis of $(V, \tau, B)$ and
$J\subseteq I$. By Proposition \ref{8} we have
\[
f^{-1}_{pu}(F, B)=\cup_{\alpha\in J}f^{-1}_{pu}(F_\alpha, B),
\]
that is a soft open set in $(U, \tau, A)$.
\end{proof}
\begin{theorem}
The soft cartesian product of soft connected space $(U, \tau_1,
A)$ and $(V, \tau_2, B)$ is soft connected.
\end{theorem}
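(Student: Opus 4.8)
The plan is to transport the classical proof that a product of connected spaces is connected into the soft setting, using Theorem \ref{babak3} (the union of soft connected subspaces with non-null intersection is soft connected) as the main engine. Fix throughout a point $x_0\in U$ and a point $y_0\in V$ in the sense of the membership relation used in Proposition \ref{10}.

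The first and decisive step is to prove that every slice is soft connected. For $x\in U$ let $(S_x,\tau_{S_x},A\times B)$ be the soft subspace of the product over the set $\{x\}\times V$, and for $y\in V$ let $(T_y,\tau_{T_y},A\times B)$ be the soft subspace over $U\times\{y\}$. The claim is that each $S_x$ is, in effect, a soft copy of $(V,\tau_2,B)$, and symmetrically each $T_y$ is a soft copy of $(U,\tau_1,A)$; hence each is soft connected. I would establish this by exhibiting a soft $pu$-continuous surjection onto the slice from the soft connected factor and then invoking the theorem that a soft $pu$-continuous image of a soft connected space is soft connected. A basic soft open set of the product is $(F,A)\times(G,B)$ with $(F,A)\in\tau_1$ and $(G,B)\in\tau_2$, and its trace on $\{x\}\times V$ at the parameter $(a,b)$ equals $\{x\}\times G(b)$ when $x\in F(a)$ and $\emptyset$ otherwise; matching these traces against the soft open sets of $(V,\tau_2,B)$ is what forces the slices to be soft connected. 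I expect this step to be the main obstacle, since it is precisely where the enlarged parameter set $A\times B$ of the subspace must be reconciled with the parameter set $B$ (respectively $A$) of the factor, and where soft $pu$-continuity has to be verified on the soft product basis elements rather than on arbitrary soft open sets.

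Once the slices are known to be soft connected the rest is formal. For each $x\in U$ form the soft subspace $C_x$ over $S_x\cup T_{y_0}$. The point $(x,y_0)$ belongs to both $S_x$ and $T_{y_0}$, so $\widetilde{S_x}\cap\widetilde{T_{y_0}}\neq\Phi_{A\times B}$, and Theorem \ref{babak3} shows that $C_x$ is soft connected.

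Finally, all the subspaces $C_x$ contain the common slice $T_{y_0}$, so $\cap_{x\in U}\widetilde{C_x}\supseteq\widetilde{T_{y_0}}\neq\Phi_{A\times B}$, and a second application of Theorem \ref{babak3} shows that the soft subspace over $\cup_{x\in U}(S_x\cup T_{y_0})$ is soft connected. Since every $(x,v)\in U\times V$ lies in $S_x\subseteq C_x$, this underlying set is all of $U\times V$, so the resulting subspace is $\widetilde{U\times V}$. Therefore the soft product space $(U\times V,\tau,A\times B)$ is soft connected.
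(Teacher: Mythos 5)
Your overall architecture coincides with the paper's: first the slices, then for each point of $U$ a cross obtained by gluing a vertical slice to a fixed horizontal one through a common point via Theorem \ref{babak3}, and finally the union of all crosses through the common base point, again via Theorem \ref{babak3}. Those two gluing steps are correct and are exactly what the paper does. The divergence, and the genuine gap, is in the slice step, which you yourself flag as ``the main obstacle'' and then never resolve. The mechanism you propose --- a soft $pu$-continuous surjection from the factor $(V,\tau_2,B)$ onto the slice subspace over $\{x\}\times V$ --- cannot be implemented with the paper's definition of $f_{pu}$: such a map requires an underlying map $u:V\longrightarrow \{x\}\times V$ (harmless, since $v\mapsto(x,v)$ is a bijection) together with a parameter map $p:B\longrightarrow A\times B$, and no choice of $p$ can be surjective once $A$ has more than one element. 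Concretely, with the natural choice $p(b)=(a_0,b)$ the image soft set satisfies $f_{pu}(V)(a,b)=\emptyset$ for every $a\neq a_0$, because $p^{-1}(a,b)\cap B=\emptyset$ there; so $f_{pu}(\widetilde{V})$ is null at almost all parameters of $A\times B$ and is never the whole slice $\widetilde{\{x\}\times V}$. Hence $f_{pu}$ does not carry $\widetilde{V}$ \emph{onto} the slice, the theorem on soft $pu$-continuous images of soft connected spaces does not apply, and the soft connectedness of the slices --- the step on which everything else in your argument rests --- remains unproved. The trace computation you sketch (that a basic open set $(F,A)\times(G,B)$ meets the slice in $\{x\}\times G(b)$ when $x\in F(a)$ and in $\emptyset$ otherwise) is correct, but it also exposes why the reduction is not painless: the trace depends on $a$ through the condition $x\in F(a)$, so the slice's soft open sets are not simply re-parametrized copies of those of $(V,\tau_2,B)$.

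For comparison, the paper avoids continuous images altogether: it assumes a soft separation of the slice $\widetilde{U\times\{v'\}}$ has the product form $(F,A)\times(\widetilde{\{v'\}},A)$, $(G,A)\times(\widetilde{\{v'\}},A)$ and reads off a soft separation $(F,A)$, $(G,A)$ of $\widetilde{U}$. That argument is itself sketchy --- it gives no justification for the product form of the separating sets, and its parameter bookkeeping is questionable --- but it at least stays inside the ambient parameter set $A\times B$ and does not collide with a structural impossibility the way your $f_{pu}$ route does. If you want to complete your version, the repair must be a direct analysis of separations of the slice in $\tau_{A\times B}$ (in the spirit of the paper's claim, but actually proved), not a soft continuous surjection from the factor, since the latter simply does not exist in the sense required by the image theorem.
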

\begin{proof}
We choose a base point $(u', v')$ in the product $U\times V$. We
can see that the soft space $(U\times\{v'\}, \tau_{v'}, A\times
B)$ is soft connected. Otherwise, there exists a soft separation
$(F, A)\times (\widetilde{\{v'\}}, A)$ and $(G, A)\times (\widetilde{\{v'\}}, A)$ of
$\widetilde{U\times\{v'\}}$, that implies $(F, A)$,
$(G, A)$ is a soft separation of $\widetilde{U}$. This is a 
contradiction. Therefore $(U\times\{v'\}, \tau_{v'}, A\times B)$
is soft connected. By a similar reason $(\{u\}\times V, \tau_u,
A\times B)$ is soft connected, for each $u\in U$. As a result each
soft space $T_u=\big((U\times\{v'\})\cup(\{u\}\times V), \tau_{v'u},
A\times B\big)$ is soft connected being the union of two soft
connected soft subspace that have non-null intersection. It is
easy to see that $(U\times V, \tau, A\times B)$ 
is soft connected, where $U\times V=\bigcup_{u\in U}\big((U\times\{v'\})\cup(\{u\}\times U)\big)$, because it is the union of a collection of soft
connected subspace that have non-null intersection
$(\{u'\}\times \{v'\})$.
\end{proof}
\begin{definition}
Let $(X, \tau_1, E)$ and $(X, \tau_2, E)$ be soft topological
spaces. Then

(i)\ If $\tau_1\subseteq\tau_2$, then $\tau_2$ is soft finer than
$\tau_1$.

(ii)\ If $\tau_1\subset\tau_2$, then $\tau_1$ is soft strictly
finer than $\tau_2$.

(iii)\ If $\tau_1\subseteq\tau_2$ or $\tau_2\subseteq\tau_1$, then
$\tau_1$ is soft comparable than $\tau_2$.
\end{definition}
\begin{proposition}
Let $(X, \tau_2, E)$ be a soft connected space and
$\tau_1\subseteq \tau_2$. Then $(X, \tau_1, E)$ is soft connected.
\end{proposition}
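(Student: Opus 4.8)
The plan is to argue by contradiction, exploiting the fact that the defining conditions of a soft separation are almost entirely set-theoretic: the only place where the topology actually enters is the requirement that the two pieces be soft open. Since $\tau_1\subseteq\tau_2$, every soft set that is soft open in $(X,\tau_1,E)$ is automatically soft open in $(X,\tau_2,E)$, so any hypothetical separation living in the coarser topology can be lifted into the finer one unchanged.

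First I would suppose, for contradiction, that $(X,\tau_1,E)$ is not soft connected. By the definition of soft connectedness this produces a soft separation of $\widetilde{X}$, that is, a pair $(F,E)$, $(G,E)$ of no-null soft open sets of $(X,\tau_1,E)$ with $\widetilde{X}=(F,E)\cup(G,E)$ and $(F,E)\cap(G,E)=\Phi_E$. Next I would observe that, because $\tau_1\subseteq\tau_2$, both $(F,E)$ and $(G,E)$ belong to $\tau_2$ and are therefore soft open in $(X,\tau_2,E)$ as well. The two displayed identities, together with the fact that $(F,E)$ and $(G,E)$ are no-null, make no reference whatsoever to the ambient topology, so they continue to hold verbatim. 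Consequently the same pair $(F,E)$, $(G,E)$ is a soft separation of $\widetilde{X}$ in $(X,\tau_2,E)$, which contradicts the assumed soft connectedness of $(X,\tau_2,E)$. Hence no separation of $\widetilde{X}$ can exist in $\tau_1$, and $(X,\tau_1,E)$ is soft connected.

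I do not expect any genuine obstacle in this argument; the single point requiring care is the direction of the inclusion. One is not claiming that soft open sets of $\tau_2$ descend to $\tau_1$ (which would be false), but only that the hypothetical $\tau_1$-separation ascends into $\tau_2$, and this is precisely what $\tau_1\subseteq\tau_2$ guarantees. If one prefers a direct rather than contrapositive phrasing, the same reasoning shows that the collection of soft sets that are both soft open and soft closed can only grow when one passes from $\tau_1$ to the finer $\tau_2$, so the earlier characterization theorem could be invoked instead; the contradiction form above, however, is the most economical route.
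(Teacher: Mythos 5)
Your proof is correct and follows essentially the same route as the paper's: both argue by contradiction, noting that a soft separation of $\widetilde{X}$ in $\tau_1$ remains a soft separation in $\tau_2$ because $\tau_1\subseteq\tau_2$ makes its members $\tau_2$-open while the union, intersection, and no-nullness conditions are purely set-theoretic. Your version merely spells out the details that the paper's terse proof leaves implicit.
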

\begin{proof}
Suppose to the contrary that $(F, E)$, $(G, E)$ is a soft
separation of $\widetilde{X}$ with soft topology $\tau_1$. Since
$\tau_1\subseteq\tau_2$, then $(F, E)$ and $(G, E)$ is a soft
separation of $\widetilde{X}$ with soft topology $\tau_2$. This is
a contradiction. Therefore $(X, \tau_1, E)$ is soft connected.
\end{proof}
We define a relation on $X$ ( or $\widetilde{X}$) by setting
$x\sim y$ if there is a soft connected subspace of $X$ such as
$(Y, \tau_Y, E)$ such that $x, y\in(Y, E)$, In other words $x, y\in Y$. Symmetry and
reflexivity of the relation are obvious. About transitivity, let
$x\sim y$ and $y\sim z$. Then there are soft connected subspace of
$X$ such as $(Y, \tau_Y, E)$ and $(Z, \tau_Z, E)$ such that $x,
y\in \widetilde{Y}$ and $y, z\in\widetilde{Z}$. This shows that
$\widetilde{Y}\cap\widetilde{Z}\neq\Phi_E$. Therefore, there
exists a soft connected subspace $(Y\cup Z, \tau_{Y\cup Z,}, E)$
such that $x, z\in\widetilde{Y\cup Z}$. Hence, $\sim$ is an equivalence relation.
\begin{definition}
The equivalence classes of the relation $\sim$ are called the soft components ( or the soft connected components ) of $\widetilde{X}$.
\end{definition}
If we denote the class of $x$ by $C$, it is easy to see that $C$ is the union of soft connected subspace of $(X, \tau, E)$ that they have $x$ in their intersection.
Theorem \ref{babak3} implies that the soft components are soft connected. Since $C=\bigcup_{x\in A_x}Ax$, where $(A_x,\tau_x,E)$ is soft connected subspace, for each $x\in X$. We can see that each soft connected subspace $(Y, E)$ lies entirely within just one of $C$.
\begin{theorem}\label{babak4}
The soft components of $(X,\tau,E)$ are soft connected disjoint subspace of $(A_x,\tau,E)$ whose union is $\widetilde{X}$, such that each soft connected subspace
lies entirely within just one of them.
\end{theorem}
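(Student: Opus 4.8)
The plan is to verify the four assertions of the statement in turn, using the equivalence relation $\sim$ together with Theorem \ref{babak3}. Denote by $\{C_\lambda\}_{\lambda\in\Lambda}$ the collection of equivalence classes of $\sim$, so that each $C_\lambda$ is, by definition, a soft component, and equip each $C_\lambda$ with its soft subspace topology $\tau_{C_\lambda}$.

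First I would record that every component is soft connected. Fix a component $C$ and a point $x\in C$. As noted just before the statement, $C=\bigcup_{x\in A_x}A_x$ is the union of all soft connected subspaces $(A_x,\tau_x,E)$ that contain $x$; since every member of this family contains the common point $x$, their intersection is non-null. Theorem \ref{babak3} then applies directly and yields that $(C,\tau_C,E)$ is soft connected.

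Next I would dispose of disjointness and the covering property simultaneously, as both are purely formal consequences of $\sim$ being an equivalence relation. Reflexivity gives that every $x\in X$ lies in its own class, whence $\widetilde{X}=\bigcup_{\lambda\in\Lambda}\widetilde{C}_\lambda$. For $\lambda\neq\mu$ the classes $C_\lambda$ and $C_\mu$ are disjoint as subsets of $X$, so $(\widetilde{C}_\lambda\cap\widetilde{C}_\mu)(e)=C_\lambda\cap C_\mu=\emptyset$ for each $e\in E$, that is, $\widetilde{C}_\lambda\cap\widetilde{C}_\mu=\Phi_E$.

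Finally I would show that an arbitrary soft connected subspace $(Y,\tau_Y,E)$ lies entirely within exactly one component. For any $x,y\in Y$ the subspace $(Y,\tau_Y,E)$ itself witnesses $x\sim y$, so all points of $Y$ fall in a single class $C$, giving $\widetilde{Y}\widetilde{\subseteq}\widetilde{C}$; uniqueness of $C$ is then immediate from the disjointness just proved, since $\widetilde{Y}$ cannot meet two distinct components. The main point to be careful about is not any single hard step but the translation between the point-set partition furnished by $\sim$ and the corresponding soft-set identities ($\Phi_E$ for empty intersections, $\widetilde{\subseteq}$ for containments); the genuine topological content has already been absorbed into Theorem \ref{babak3}.
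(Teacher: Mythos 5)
Your proposal is correct and follows essentially the same route as the paper: the paper gives no separate proof of Theorem \ref{babak4}, but the paragraph preceding it contains exactly your argument --- each class $C$ is the union of all soft connected subspaces through a common point $x$, so Theorem \ref{babak3} gives soft connectedness, while disjointness, the covering of $\widetilde{X}$, and the fact that any soft connected subspace $(Y,\tau_Y,E)$ lies in exactly one component follow formally from $\sim$ being an equivalence relation (with $(Y,\tau_Y,E)$ itself witnessing $x\sim y$ for $x,y\in Y$). Your write-up merely makes explicit the translation between the point-set partition and the soft-set identities ($\Phi_E$, $\widetilde{\subseteq}$), which the paper leaves implicit.
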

\begin{definition}
A soft topological space $(X, \tau, E)$ is said to be soft locally connected at $x\in X$ if for every soft open set $(F, E)$ containing $x$, there is a soft connected soft open $(G, E)$ containing $x$ contained in $(F, E)$. If $(X, \tau, E)$ is soft locally connected at each of its points, it is said simply to be soft locally connected.
\end{definition}
\begin{theorem}\label{9}
A soft topological space $(X, \tau, E)$ is soft locally connected if and only if for every soft open set $(Y, E)$ of $X$, each soft component of $(Y, \tau_{Y}, E)$ is soft open in $X$.
\end{theorem}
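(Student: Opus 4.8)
The plan is to mirror the classical characterization of local connectedness, proving the two implications separately and leaning on Theorem~\ref{babak4}, which guarantees that the soft components of any soft subspace are soft connected and that every soft connected subspace sits inside exactly one of them. Theorem~\ref{babak3} stands behind this, but I expect to invoke only the packaged statement~\ref{babak4}.

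For the forward implication I would assume $(X,\tau,E)$ is soft locally connected, fix a soft open set $(Y,E)$, and let $C$ denote an arbitrary soft component of the soft subspace $(Y,\tau_Y,E)$. The goal is to exhibit $C$ as a union of soft open sets of $X$. To that end I would take any point $x$ of $C$; since $(Y,E)$ is itself a soft open set containing $x$, soft local connectedness produces a soft connected soft open set $(G_x,E)$ with $x\in(G_x,E)$ and $(G_x,E)\widetilde{\subseteq}(Y,E)$. Regarding $(G_x,E)$ as a soft connected subspace of $(Y,\tau_Y,E)$ and applying Theorem~\ref{babak4} to the subspace $Y$, this soft connected piece must lie entirely within the single soft component of $Y$ that contains $x$, namely $C$; hence $(G_x,E)\widetilde{\subseteq}C$. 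Since $x$ was arbitrary and each $(G_x,E)\in\tau$, the identity $C=\bigcup_{x}(G_x,E)$ displays $C$ as a union of soft open sets of $X$, so $C$ is soft open in $X$.

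For the converse I would assume that the soft components of every soft open subspace are soft open in $X$ and verify soft local connectedness at an arbitrary $x\in X$. Given a soft open set $(F,E)$ with $x\in(F,E)$, let $C$ be the soft component of the subspace $(F,\tau_F,E)$ that contains $x$. By Theorem~\ref{babak4} the subspace $C$ is soft connected, and by hypothesis $C$ is soft open in $X$; moreover $x\in C\widetilde{\subseteq}(F,E)$. Thus $C$ is exactly the soft connected soft open set demanded by the definition, so $(X,\tau,E)$ is soft locally connected at $x$. As $x$ is arbitrary, $(X,\tau,E)$ is soft locally connected.

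The step deserving the most care, and the main obstacle, is the containment $(G_x,E)\widetilde{\subseteq}C$ in the forward direction, together with the soft subspace bookkeeping it rests on. One must first check, using Lemma~\ref{5} and the definition of the soft relative topology, that $(G_x,E)$ — soft open in $X$ and contained in $(Y,E)$ — genuinely determines a soft connected \emph{subspace} of $(Y,\tau_Y,E)$ before Theorem~\ref{babak4} can place it inside the component $C$. A secondary but routine point is confirming, through the definition of point membership $x\in\bigcap_{e\in E}F(e)$ in a soft set, that $\bigcup_x(G_x,E)$ recovers all of $C$ rather than a proper sub soft set, so that the conclusion that $C$ is soft open is legitimate.
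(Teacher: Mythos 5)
Your proposal is correct and follows essentially the same route as the paper's own proof: the forward direction covers the component $C$ by soft connected soft open sets obtained from soft local connectedness applied to $(Y,E)$, placed inside $C$ via Theorem~\ref{babak4}, and the converse takes the soft component of the given soft open set, which is soft connected by Theorem~\ref{babak4} and soft open by hypothesis. If anything, you are more careful than the paper, which simply asserts $x\in(F_x,E)\widetilde{\subseteq}C$ without spelling out the appeal to Theorem~\ref{babak4} and the subspace bookkeeping that you rightly flag as the delicate step.
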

\begin{proof}
Let $C$ be a soft component of $(Y, \tau_{Y}, E)$. If $x\in C$, there exists a soft connected soft open $(F_x, E)$ such that $x\in(F_x, E)\widetilde{\subseteq} C$. It is easy to see that $C=\cup_{x\in C}(F_x, E)$. Therefore $C$ is soft open in $X$. Conversely, let $(Y, E)$ be a soft open set containing $x$. Let $C$ be the soft component of $(Y, \tau_{Y}, E)$ containing $x$. By the hypothesis $C$ is soft open and by Theorem \ref{babak4}, $(C, \tau_{C}, E)$ is soft connected. Therefore, $(X, \tau, E)$ is soft locally connected.
\end{proof}
\begin{definition}
A soft topological space $(X, \tau, E)$ is said to be soft weakly locally connected at $x\in X$ if for every soft open set $(Y, E)$ containing $x$, there is a soft connected subspace of $(X, \tau, E)$ contained in $(Y, E)$ that contains a soft open set $(F, E)$ containing $x$ and similar to the soft locally connectedness, if $(X, \tau, E)$ is soft weakly locally connected at each of its points, it is said simply to be soft weakly locally connected.
\end{definition}
\begin{theorem}
If $(X, \tau, E)$ is soft weakly locally connected. Then it is soft locally connected.
\end{theorem}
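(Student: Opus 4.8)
The plan is to reduce the statement to the characterization of soft local connectedness furnished by Theorem \ref{9}, according to which $(X,\tau,E)$ is soft locally connected precisely when, for every soft open set $(Y,E)$ of $X$, each soft component of the subspace $(Y,\tau_Y,E)$ is soft open in $X$. Thus it suffices to fix an arbitrary soft open set $(Y,E)$, let $C$ be an arbitrary soft component of $(Y,\tau_Y,E)$, and prove that $C$ is soft open in $X$. I would establish this openness pointwise and then take a union.

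First I would fix $x\in C\,\widetilde{\subseteq}\,(Y,E)$. Since $(Y,E)$ is a soft open set containing $x$ and $(X,\tau,E)$ is soft weakly locally connected at $x$, the definition supplies a soft connected subspace $(W,\tau_W,E)$ with $\widetilde{W}\,\widetilde{\subseteq}\,(Y,E)$ that contains a soft open set $(F_x,E)$ with $x\in(F_x,E)$. The crucial step is to promote the mere inclusion $\widetilde{W}\,\widetilde{\subseteq}\,(Y,E)$ to the stronger inclusion $\widetilde{W}\,\widetilde{\subseteq}\,C$. To do so, I would note by Lemma \ref{5} that $(W,\tau_W,E)$ is a soft connected subspace of $(Y,\tau_Y,E)$, and that it meets the component $C$ at the point $x$. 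Applying the partition property of soft components recorded in Theorem \ref{babak4} to the soft space $(Y,\tau_Y,E)$ — namely that each soft connected subspace lies entirely within just one soft component — I would conclude $\widetilde{W}\,\widetilde{\subseteq}\,C$, and therefore $x\in(F_x,E)\,\widetilde{\subseteq}\,\widetilde{W}\,\widetilde{\subseteq}\,C$. Having produced for each $x\in C$ a soft open set $(F_x,E)$ with $x\in(F_x,E)\,\widetilde{\subseteq}\,C$, the union $C=\bigcup_{x\in C}(F_x,E)$ is an arbitrary union of soft open sets of $X$, hence soft open in $X$. As $(Y,E)$ and $C$ were arbitrary, Theorem \ref{9} then delivers that $(X,\tau,E)$ is soft locally connected.

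I expect the main obstacle to be precisely the middle step: verifying rigorously that the soft connected subspace $\widetilde{W}$ provided by weak local connectedness is trapped inside the single component $C$, rather than merely inside $(Y,E)$. This hinges on combining two facts in the soft setting that must be cited with care — the transitivity of soft subspaces (Lemma \ref{5}), which makes $(W,\tau_W,E)$ a bona fide subspace of $(Y,\tau_Y,E)$, and the fact that each soft connected subspace lands in exactly one soft component (Theorem \ref{babak4}). A secondary point worth checking is that $(F_x,E)$ is soft open in $X$ itself; here this is immediate, since weak local connectedness yields a soft open set of $X$ directly, and no transfer of openness between $X$ and the subspace $(Y,\tau_Y,E)$ is required.
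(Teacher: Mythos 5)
Your proposal is correct and follows essentially the same route as the paper's own proof: reduce via Theorem \ref{9} to showing each soft component $C$ of a soft open $(Y,E)$ is soft open, invoke soft weak local connectedness at each $x\in C$ to get $x\in(F_x,E)\,\widetilde{\subseteq}\,\widetilde{W}\,\widetilde{\subseteq}\,C$, and conclude $C=\bigcup_{x\in C}(F_x,E)$. The only difference is that you explicitly justify the inclusion of the connected subspace in $C$ (via Lemma \ref{5} and the single-component property of Theorem \ref{babak4}), a step the paper asserts in its chain of inclusions without comment.
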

\begin{proof}
By Theorem \ref{9} it is sufficient to show that for every soft open set $(Y, E)$, each soft component of $(Y, \tau_{Y}, E)$ is soft open. Let $C$ be a soft component of $(Y, \tau_{Y}, E)$ containing $x$.Then, there are a soft connected subspace $(Z, \tau_Z, E)$ and a soft open set $(F_x, E)$ containing $x$ such that $x\in(F_x, E)\widetilde{\subseteq} (Z, E)\widetilde{\subseteq}C\widetilde{\subseteq}(Y, E)$. This shows that $C=\cup_{x\in C}(F_x, E)$. This completes the proof.
\end{proof}

\noindent
Esmaeil Peyghan and Babak Samadi\\
Department of Mathematics, Faculty  of Science\\
Arak University\\
Arak 38156-8-8349,  Iran\\
Email: epeyghan@gmail.com
\bigskip

\noindent
Akbar Tayebi\\
Department of Mathematics, Faculty  of Science\\
University of Qom \\
Qom. Iran\\
Email:\ akbar.tayebi@gmail.com

\end{document}